\documentclass[12pt,reqno]{amsart}
\usepackage[latin1]{inputenc}
\usepackage{amsmath,amssymb,amsfonts,amsthm,amscd,amstext,amsxtra,amsopn,array,url,verbatim,mathrsfs}
\usepackage{graphicx}
\usepackage{amsmath,amssymb,amsfonts,amsthm,amssymb,amscd,url,amstext,amsxtra,amsopn}
\usepackage{verbatim}
\usepackage{fullpage}
\usepackage{times}
\usepackage{ upgreek }
\usepackage{dsfont}
\usepackage{amsmath}
\usepackage{amsfonts}
\usepackage{amssymb}
\usepackage{amsmath, amsthm}     
\usepackage{amssymb}
\usepackage{subfigure}
\usepackage{url}
\usepackage{graphicx}
\usepackage{amsmath,amsfonts}
\usepackage{epsf,verbatim}
\usepackage[latin1]{inputenc}
\usepackage[T1]{fontenc}
\usepackage{lmodern}

\newtheorem{lemma}{Lemma}[section]

\newtheorem{theorem}{Theorem}[section]

\newtheorem*{conjecture}{Conjecture}

\theoremstyle{remark}
\newtheorem{remark}{Remark}[section]
\numberwithin{equation}{section}
\numberwithin{table}{section}
\numberwithin{figure}{section}

\begin{document}

\pagenumbering{arabic}


\title{{\textbf {On binary and quadratic divisor problems}}}
\author{Farzad Aryan}

\begin{abstract}
We study the shifted convolution sum of the divisor function and some other arithmetic functions.
\end{abstract}
\noindent
\maketitle
\section*{\textbf{Introduction}}
In this paper we are concerned with shifted convolution sums of several arithmetic functions. We divide the introduction into three parts. In the first part we discuss the binary divisor problem which plays an important role in bounding the forth moment of the zeta function on the critical line. In the second part we discuss
 the quadratic divisor problem which has applications to bounding more general $L$-functions. In the last part of the introduction we discus the application of the quadratic divisor problem to estimating the general shifted divisor problem, and the Lindel\"{o}f hypothesis.
\subsection{Binary convolution sums}
The binary additive divisor problem is related to calculation of
\begin{equation}
\label{0.1}
\sum_{n-m=h}d(n)d(m)f(n,m),\end{equation}
 where $f$ is a smooth function on $\mathbb{R}^{+}\times \mathbb{R}^{+}$ which oscillates mildly.
Vinogradov \cite{A.V} and Conrey and Gonek in \cite{C.G} conjectured that $$\sum_{n \leq X}d(n)d(n+h)= \textit{Main term} + O(X^{1/2 + \epsilon}),$$ uniformly for $h \leq X^{1/2},$ where the main term is of the form $XP(\log X),$ where $P$ is a quadratic polynomial whose coefficients are functions of $h$  . This problem begins with Ingham, who found an asymptotic with error term $o(X)$. Estermann \cite{Es} improved the error term to $O(X^{11/12 +\epsilon}).$ Using Weil's optimal bound on Kloosterman sums, Heath-Brown \cite{He} improved the error term to $O(X^{5/6 +\epsilon}).$ The final improvement on the error term with respect to $X,$ was obtained by Deshouillers and Iwaniec \cite{Des-Iwa}. For fixed $h$ they proved
 \begin{equation}
\label{d(n)}
\sum_{n \leq X}d(n)d(n+h)= \textit{Main term} + O(X^{2/3 + \epsilon}).
 \end{equation}
Further improvement in the $h$-aspect was obtained by Motohashi \cite{Mo}, where he proved a uniform result for $h \leq X^{64/39}.$ Finally, Meurman \cite{Meu} improved the range  to $h \leq X^{2-\epsilon}.$ This is the best result in the literature.\\
\\
\noindent In this article shifted convolution sum of the shape
\begin{equation}
 \mathcal{D}_{f, \lambda, \gamma} (a, b, h)= \sum_{an-bm=h}\lambda(n)\gamma(m)f(an, bm)
\end{equation}
shall be considered for sequences including the divisor function, Fourier coefficients of a primitive cusp form and the number of representations of an integer $n,$ as a sum of two squares. Let $a(n)$ be $n$-th Fourier coefficient of a primitive cusp form of weight $k$ and the level $N$
\begin{equation}
\label{cusp}
f(z)=\sum_{n=1}^{\infty} a(n)n^{(k-1)/2}e(nz).
\end{equation}
The shifted convolution sum for $a(n)$ is
\begin{equation}
\label{a(n)}
\mathcal{D}_{f, a, a}(\alpha, \beta, h):=\sum_{\alpha n-\beta m=h}a(n)a(m)f(\alpha n, \beta m).
\end{equation}
Blomer \cite{Bl} proved that if $f$ is supported on $[X,2X] \times [X,2X]$ and has decaying partial derivatives, satisfying
 \begin{equation}
\label{0.1}
\frac{\partial^{i+j}}{\partial x^{i} \partial y^{j}} f(x, y) \ll \frac{1}{X^{i+j}},
\end{equation}
then $$\mathcal{D}_{f, a, a}(\alpha, \beta, h) \ll X^{1/2 +\theta +\epsilon}.$$
The Ramanujan-Petersson conjecture predicts $\theta =0$ and the Weil bound for Kloosterman
sums gives $\theta \leq 1/4.$ Kim and Shahidi \cite{Kim-Shah} proved $\theta \leq 1/9,$  and the current best bound is $\theta \leq 7/64,$ due to Kim and Sarnak \cite{Kim-Sar}. \\
\\
\noindent For the sum of two squares we have $$r(n)=\# \{(x,y) : x^2+y^2=n\} = 4\sum_{d|n} \chi_4(d),$$
where $\chi_4$ is the non principal character modulo $4.$ For odd $h,$ Iwaniec \cite{Iw-r(n)}, by employing spectral theory, proved  $$\sum_{n\leq X} r(n)r(n+h)= 8\big(\sum_{d|h}\frac{1}{d}\big)X + O(h^{1/3}X^{2/3}),$$ and Chamizo \cite{Chaz} gave a conditional result for general $h.$ \\
\\
 \noindent There is a major difference between sequences like $d(n)$ or $r(n)$ and $a(n)$. We will explain it as follows. For
$a(n)$ we have  $$\sum_{n \leq X} a(n)e(n\alpha) \ll \sqrt X \log X,$$ while the same sum  obtained by replacing $a(n)$ with $d(n)$ or $r(n)$, depends on $\alpha$, has  main terms bigger than $\sqrt{X}$.  This difference makes it harder to deal with shifted convolution sums of sequences like $d(n)$ or $r(n)$. More precisely, the circle method developed by Jutila \cite{J} is very powerful to calculate the shifted convolution sums of coefficients of modular or Mass forms of $SL(2, \mathbb{Z})$ and even $SL(3, \mathbb{Z})$,( see \cite{Munshi}).  However because of the difference mentioned, the Jutila circle method is not useful for shifted convolution sums of the sequences with the main terms. The purpose of this part of this article is to develop the $\delta$-method of  Duke and Friedlander and Iwaniec \cite{DFI} in order to handle shifted convolution sums of these sequences, with good error terms. The key ingredient is using the Voronoi type summation formula to bring up a Kloosterman sums inside the circle method. Then instead of using Weil bound on Kloosterman sums we will get a better error term by means of the Kuznetsov trace formula \cite{Des-Iwa-Kuz} Theorem 1.  \\
\\
\noindent
In this direction we prove
\begin{theorem}
\label{Thm-1}
Let $f$ be a smooth function supported in $[X, 2X]\times[X, 2X]$ satisfying \eqref{0.1}. then for $\epsilon>0$ and $h \ll X^{1-\epsilon},$ we have
\begin{equation}
\label{Thm-1-eq}
\sum_{n-m=h}d(n)d(m)f(n, m)= \text{Main term} + O(X^{1/2 + \epsilon}h^{\theta}).
\end{equation}
Where the Ramanujan Petersson conjecture predict that $\theta=0$ and the Main term is the same as \cite[Equation 5]{DFI} with $a, b=1$ .
\end{theorem}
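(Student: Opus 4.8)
The plan is to follow the $\delta$-method of Duke, Friedlander and Iwaniec, replacing the Weil bound at the final stage by the Kuznetsov trace formula. First I would detect the linear condition $n-m=h$ using the DFI decomposition of the delta symbol: writing $k=n-m-h$, one has $\delta(k)=\sum_{q\ge 1}\frac{1}{qQ}\sum_{a\bmod q}^{*}e(ak/q)\,h_Q(q,k)$ for a suitable smooth weight $h_Q$ and a free parameter $Q$, which I would set at $Q\asymp X^{1/2}$ so that effectively $q\le Q$. Substituting this into the left side of \eqref{Thm-1-eq} and separating the $n$- and $m$-variables turns it into $\sum_{q\le Q}\frac{1}{qQ}\sum_{a\bmod q}^{*}e(-ah/q)\,\mathcal S_n(a/q)\,\mathcal S_m(-a/q)$, where $\mathcal S_n(a/q)=\sum_n d(n)e(an/q)g(n)$ and the weights $g$ inherit the derivative bounds \eqref{0.1} from $f$ and $h_Q$.

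Next I would apply the Voronoi summation formula for $d(n)$ to each of the additive twists $\mathcal S_n(a/q)$ and $\mathcal S_m(-a/q)$. The polar (main) terms of Voronoi, which carry the $\log$ factors and the arithmetic of $q$, recombine after the summation over $q$ and $a$ into the predicted Main term of \cite[Equation 5]{DFI} with $a,b=1$; matching this term is bookkeeping. The dual terms express each twisted sum as a new sum $\sim\frac{1}{q}\sum_{n'}d(n')e(\mp\bar a n'/q)\,\check g(n')$, where $\check g$ is a Bessel/Hankel transform of $g$. Carrying out the remaining sum over $a\bmod q$ against $e(-ah/q)$ collapses the two factors $e(-\bar a n'/q)$ and $e(\bar a m'/q)$ into a Kloosterman sum $S(h,m'-n';q)$ (up to signs). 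At this point the off-diagonal contribution has the shape $\sum_{q\le Q}\frac{1}{q^{2}}\sum_{n',m'}d(n')d(m')\,S(h,\pm(m'-n');q)\,\Phi(n',m',q)$ for an explicit smooth kernel $\Phi$ that localizes $n',m'$ to length about $q^{2}/X\le X^{\epsilon}$ past which it decays rapidly.

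The decisive step is to not bound the Kloosterman sums individually but to feed the $q$-sum into the Kuznetsov trace formula \cite[Theorem 1]{Des-Iwa-Kuz}. This rewrites $\sum_q\frac{1}{q}S(h,\ell;q)\phi(q)$ spectrally in terms of $\sum_j\rho_j(h)\overline{\rho_j(\ell)}\,\hat\phi(t_j)$ together with the holomorphic and Eisenstein analogues, where $\rho_j$ are the Fourier coefficients of the Maass cusp forms with spectral parameters $t_j$. Applying the Deshouillers--Iwaniec spectral large sieve to the tempered part of the spectrum ($\lambda_j\ge 1/4$, i.e.\ $t_j$ real) and to the continuous spectrum yields a total contribution of size $O(X^{1/2+\epsilon})$, uniformly for $h\ll X^{1-\epsilon}$. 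The exceptional eigenvalues $\lambda_j=1/4-t_j^{2}<1/4$, should Selberg's conjecture fail, contribute Fourier coefficients $\rho_j(h)$ that grow like $h^{|\mathrm{Im}\,t_j|}$; bounding $\max_j|\mathrm{Im}\,t_j|\le\theta$ by the best bound toward Selberg's eigenvalue conjecture produces exactly the factor $h^{\theta}$, and under Ramanujan--Petersson/Selberg ($\theta=0$) this factor disappears.

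The main obstacle is analytic rather than structural: I must control the two successive integral transforms --- the Hankel transforms from Voronoi and the Bessel kernel $\hat\phi$ entering Kuznetsov --- simultaneously, showing that the composite kernel $\Phi$ both truncates the dual variables $n',m'$ to essentially bounded length and decays fast enough in $q$ for the large sieve to apply with no loss. Propagating the derivative bounds \eqref{0.1} through both transforms, and in particular verifying that the test function fed to Kuznetsov has the support and decay needed to separate the tempered spectrum (giving $X^{1/2+\epsilon}$) from the exceptional spectrum (giving the $h^{\theta}$) with full uniformity in $h$, is the delicate part of the argument. Keeping the $q$-sum length tied to $Q\asymp X^{1/2}$ is what ultimately pins the main saving to $X^{1/2}$.
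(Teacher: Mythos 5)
Your outline has the same skeleton as the paper's proof: the DFI $\delta$-method, double Voronoi summation producing Kloosterman sums $S(h,\pm(m'-n');q)$ with short dual sums, and then spectral averaging of those Kloosterman sums, with the factor $h^{\theta}$ coming from the exceptional spectrum. (One cosmetic difference: the paper never runs Kuznetsov and the large sieve by hand; it quotes the packaged Deshouillers--Iwaniec bound, Lemma \ref{Lemma-J}, applied after a Harcos-style dyadic partition, with its hypotheses verified in Lemma \ref{lemma-final}.) However, there is a genuine gap, and it sits exactly at the point you defer as ``the delicate part'': you fix $Q\asymp X^{1/2}$, which is DFI's original choice, and with that choice the claim that the composite kernel ``decays fast enough in $q$ for the large sieve to apply with no loss'' is false. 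For $q$ below $X^{1/2-\epsilon}$ the kernel attached to the Kloosterman sums is \emph{not} small: following DFI (their Equation (30)), for $q\ll 1$ one has $I(n',m',q)\asymp\sqrt{X}$, whereas the averaging step needs size $X^{o(1)}$. Quantitatively, on a dyadic block $q\sim C$ the normalized kernel $I(n',m',q)/q^{2}$ has size about $\sqrt{X}/C$, so Lemma \ref{Lemma-J} (equivalently, Kuznetsov plus the spectral large sieve) yields roughly
$(\sqrt{h}+C)\cdot(\sqrt{X}/C)\cdot h^{\theta}X^{o(1)}=\big(\sqrt{hX}/C+\sqrt{X}\big)h^{\theta}X^{o(1)}$,
which exceeds the target $X^{1/2+\epsilon}h^{\theta}$ as soon as $C\ll\sqrt{h}$; on the other hand the Weil bound on the same block gives about $C^{1/2}\sqrt{X}$, which exceeds the target as soon as $C\gg X^{2\epsilon}$. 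Since the theorem must be uniform for $h$ as large as $X^{1-\epsilon}$, the whole middle range $X^{2\epsilon}\ll q\ll X^{1/2-\epsilon}$ is inaccessible to both tools; optimizing between them block by block caps your method at roughly $\sqrt{X}\,h^{1/6}$, far from $X^{1/2+\epsilon}h^{\theta}$.

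The missing idea --- the actual technical novelty of the paper's proof --- is to enlarge the $\delta$-method parameter to $Q=X^{1/2+\varepsilon}$. Because the weight $w$ satisfies $w^{(i)}\ll Q^{-i-1}$, for every $q<X^{1/2-\varepsilon}$ one has $qQ<X$, and repeated integration by parts against the Bessel kernels makes $I(n,m,q)$ negligible (of size $X^{-1}$ times arbitrary negative powers of $n,m$; Lemma \ref{lemma-q}); moreover the dual variables are truncated at $X^{3\varepsilon}$ (Lemma \ref{lemma-r}). The entire error term then lives on the single range $X^{1/2-\varepsilon}<q<X^{1/2+\varepsilon}$, where the kernel has size $X^{O(\varepsilon)}$, the derivative conditions of Lemma \ref{Lemma-J} hold (Lemma \ref{lemma-final}), and one application of that lemma with $P=X^{3\varepsilon}$ and $a_p=d(n)d(n+p)$ gives $(\sqrt{h}+Q)P^{1/2}h^{\theta}X^{o(1)}\ll X^{1/2+\epsilon}h^{\theta}$. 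Without this enlargement of $Q$ (or some substitute device that annihilates the contribution of $q<X^{1/2-\varepsilon}$), your plan cannot deliver the claimed error term uniformly in $h$.
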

\noindent This improves on Meurman's result \cite{Meu}, $O(X^{1/2 +\epsilon}h^{1/8 + \theta/2})$, for the weight function $f$ satisfying \eqref{0.1}. (See page 238 of \cite{Meu} with $N \asymp X.$).\\
\\
\noindent Another example of sequences with main terms is obtained using a Dirichlet character. Let $\tau_{\chi}(n)= \sum_{d|n} \chi(d),$
We prove that
\begin{theorem}
\label{Thm-2}
Let $\chi$ be an odd primitive character modulo a prime number $p$. Let $f$ be a smooth function supported in $[X, 2X]\times[X, 2X]$ satisfying \eqref{0.1}. Then for $h \ll X^{1-\epsilon},$ if $p|h$ we have
 \begin{equation}
\label{Thm-2-eq}
\sum_{n-m=h}\tau_{\chi}(n)\tau_{\overline{\chi}}(m)f(n, m)= \text{Main term} + O(X^{1/2 +\theta + \epsilon}),
\end{equation}
where the Main term stated in the Equation \eqref{Main-th2}.\\
\label{Thm-3}\\
 For the sum of two squares, if $4|h$  we have
\begin{equation}
\label{Thm-3-eq}
\sum_{n-m=h}r(n)r(m)f(n ,m)= \text{Main term} + O(X^{1/2 +\theta + \epsilon}).
\end{equation}
where the main term comes from the setting $q=4$ in the Equation \eqref{Main-th2}.
\end{theorem}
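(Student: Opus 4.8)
The plan is to treat both statements together, since $r(n)=4\sum_{d\mid n}\chi_4(d)=4\,\tau_{\chi_4}(n)$ exhibits the sum of two squares as the twisted divisor function attached to the non-principal character modulo $4$; thus \eqref{Thm-3-eq} is the specialization of the argument for \eqref{Thm-2-eq} to the modulus $q=4$, and I will carry out the general character case, noting at the end the only place where primality is invoked. The starting point is the same $\delta$-method of Duke--Friedlander--Iwaniec already used for Theorem \ref{Thm-1}: I would detect $n-m=h$ and write
\begin{equation*}
\sum_{n-m=h}\tau_\chi(n)\tau_{\overline{\chi}}(m)f(n,m)=\sum_{q\le Q}\frac{1}{qQ}\sum_{a\bmod q}^{*}e\!\left(\frac{-ah}{q}\right)\sum_{n,m}\tau_\chi(n)\tau_{\overline{\chi}}(m)\,e\!\left(\frac{a(n-m)}{q}\right)g_q(n,m),
\end{equation*}
with $Q\asymp X^{1/2}$ and $g_q$ a smooth weight built from $f$ and the standard DFI kernel $w(\cdot,\cdot)$, whose derivatives are controlled by \eqref{0.1}.

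Next, for each fixed $q$ and $a$ I would separate the two variables (via a Mellin decomposition of the coupling kernel $g_q$) and apply the Voronoi summation formula for $\tau_\chi$, whose generating Dirichlet series is $\zeta(s)L(s,\chi)$. The pole of $\zeta(s)$ at $s=1$ produces the main term, which after summation over $q$ and $a$ assembles into the expression \eqref{Main-th2}; the remaining part of the functional equation produces a dual sum in which the additive character $e(an/q)$ is inverted to $e(\bar a \nu/q)$ and a Gauss sum $\tau(\chi)$ is extracted. Carrying this out on both the $n$- and $m$-variables, the residue sum $\sum_{a\bmod q}^{*}$ now carries character factors together with $e\bigl((-h+\cdots)\bar a/q\bigr)$, and collapses to a Kloosterman sum $S(m,n;q)$.

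The crucial arithmetic step is the evaluation of this residue sum, and this is exactly where the hypothesis $p\mid h$ (respectively $4\mid h$) enters: it guarantees that the twisted exponential sum does not split off a degenerate character factor but reduces cleanly to a classical Kloosterman sum of the shape demanded by the Kuznetsov formula, and that the main term takes the clean form \eqref{Main-th2}. I would then be left with an expression $\sum_q q^{-1}S(m,n;q)\,\Phi(\cdots)$ for smooth, well-localized test functions $\Phi$. Rather than inserting the Weil bound --- which would only recover $\theta\le 1/4$ --- I would open this with the Kuznetsov trace formula (\cite{Des-Iwa-Kuz}, Theorem 1), converting it into a spectral average over Fourier coefficients of Maass forms together with the continuous spectrum.

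The final step, and the place I expect the real difficulty, is bounding the spectral side uniformly in $q$, $m$, $n$ and in the shift $h$: the contribution of the exceptional eigenvalues below $1/4$ is precisely what produces the factor $X^{\theta}$ in \eqref{Thm-2-eq}, with $\theta=7/64$ from Kim--Sarnak and $\theta=0$ on Ramanujan--Petersson. The main obstacles will be the bookkeeping of the Gauss sums and the inverse residues $\bar a,\bar h$ in the argument of the Kloosterman sum, checking that the resulting weights $\Phi$ satisfy the decay and support hypotheses of the Kuznetsov formula, and tracking every error contribution so the total is $O(X^{1/2+\theta+\epsilon})$. For the sum of two squares the same chain goes through with $\chi=\chi_4$ and $q=4$; the only change is that the modulus is no longer prime, but since $4\mid h$ the identical clean reduction of the residue sum to a Kloosterman sum holds, yielding \eqref{Thm-3-eq}.
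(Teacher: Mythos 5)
Your skeleton ($\delta$-method, Voronoi summation for $\tau_\chi$, formation of Kloosterman sums, then spectral averaging instead of Weil, with the exceptional spectrum producing $X^{\theta}$) is indeed the paper's route, and you correctly locate where $p\mid h$ enters. But there is a genuine gap in how you propose to average the Kloosterman sums. After Voronoi, the modulus sum does \emph{not} run over all $q$ with a single type of Kloosterman sum: the Voronoi formula for $\tau_\chi$ takes different shapes according as $(p,q)=1$ (formula \eqref{(p, q)=1}, whose dual frequencies carry $\overline{dp}$, yielding sums $S(-h',-(n+m);q)$ with $h'=h/p$, restricted to $(q,p)=1$) or $p\mid q$ (formula \eqref{p|q}, yielding $S(-h,-(n+m);q)$ restricted to $p\mid q$). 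Neither restricted sum can be fed directly into a Kuznetsov-type average over \emph{all} moduli, which is what your step ``open this with the Kuznetsov trace formula'' presupposes. The paper needs two distinct inputs: the Deshouillers--Iwaniec bound for sums over all moduli (Lemma \ref{Lemma-J}) and Blomer's bound for sums over multiples of a fixed integer (Lemma \ref{Lemma-Blo}); and for the $(p,q)=1$ class it completes the sum by \emph{adding and subtracting} the $p\mid q$ terms, treating the completed sum with Lemma \ref{Lemma-J} and the correction with Lemma \ref{Lemma-Blo}. Without this completion device (or an equivalent level-aware spectral argument) your plan stalls exactly at the averaging step, which is where the theorem's strength lies.

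The second gap is your claim that the $r(n)$ statement is the ``specialization to modulus $4$'' with no change other than $4\mid h$. Since $4$ is not prime, the moduli fall into \emph{three} classes: $(q,4)=1$, $4\mid q$, and $q\equiv 2 \pmod 4$. The last class is covered by neither standard Voronoi formula, and the paper has to prove a new one (the $c=4$, $q\equiv 2\pmod 4$ case of Lemma \ref{Vor-Lemma}), whose dual sum involves the modified coefficients $r^{*}(n)$ of \eqref{r^*} and frequencies with denominator $2q$; Kloosterman sums then form only because $r^{*}$ vanishes on even integers, so that $(n+m)/2$ is integral. The obstruction here is not the shift $h$ at all but the intermediate residue class of $q$, and the subsequent averaging for that class again needs the add-and-subtract trick with Lemma \ref{Lemma-Blo} applied twice. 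Finally, note that what you defer as ``checking that the weights $\Phi$ satisfy the hypotheses of the Kuznetsov formula'' is most of the technical work in the paper: one must take $Q=X^{1/2+\varepsilon}$, show the contribution of $q<X^{1/2-\varepsilon}$ and of large dual frequencies is negligible, and prove the mild-oscillation bounds on $I(n,m,q)/q^2$ (the analogues of Lemmas \ref{lemma-q}, \ref{lemma-r}, \ref{lemma-final}) before either averaging lemma applies.
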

 \noindent Note that \eqref{Thm-2-eq} improves, in the binary case, the error $O(X^{3/4+\epsilon})$ obtained by Heap \cite{W.H}.\\
Our method seems to be applicable to the shifted convolution sum of the divisor function and the a Fourier coefficient of cusp form of the full modular group and weight $k$. We expect following to hold
\begin{equation}
\label{Thm-4-eq}
\sum_{n-m=h}a(n)d(m)f(n, m)=  O(X^{1/2 + \epsilon}h^{\theta}).
\end{equation}
\noindent Next we look at more general shifted convolution sums.
\subsection{Quadratic divisor problem} We begin with recalling the fact that an application of the binary divisor problem is in bounding the moments of the zeta function. In this section we study a variation of the binary divisor problem that has applications in a wider and more complicated families of $L$-functions. Let $L(f, s)$ be the $L$-function attached to the $f$ in \eqref{cusp}, i.e.
\begin{equation}
L(s, f):= \sum_{n=1}^{\infty}a(n)n^{-s}.
\end{equation}
$L(f, s)$ satisfies a functional equation and by using the functional equation we obtain the convexity bound $L(s, f) \ll (k^2|s^2|D)^{\frac{1}{4}+\epsilon}.$
Note that the Lindel\"{o}f hypothesis asserts
\begin{equation}
\label{Lindelof}
L(s, f) \ll \big(k^2|s^2|D\big)^{\epsilon}.
\end{equation} In many applications it is suffices to replace the exponent $1/4$ by any smaller number. Such estimate is called  a subconvex bound that is also known as breaking the convexity bound. In order to break the convexity bound on $L(s, f)$, Duke, Friedlander and Iwaniec in \cite{DFI-2} needed an asymptotic with a good error term for $D_{f}(a, 1; h)$ where 
\begin{equation}
\label{0.2}
D_{f}(a, b; h):= \sum_{an-bm=h}d(n)d(m)f(an,bm).
\end{equation}
In \cite{DFI} they proved that if $f$ satisfies \eqref{0.1} then
\begin{equation}
\label{DFI-th}
D_{f}(a, b; h) = \text{\textit{ Main term}}(f, a, b) + E_{f}( a, b, h),
\end{equation}
where $E_{f}( a, b, h)= O(X^{3/4+\epsilon})$. Note that the main term has order of magnitude of $X/ab,$ thus the result is nontrivial as long as $ab <X^{1/4}$. In general, improving the error term or getting the error term of order $X^{1-\epsilon}/ab,$ appears to be an extremely hard problem which we discus in the next section. The purpose of this part of this article is to improve the error term when one of $a$ or $b$ equals $1$. We prove the following:
\begin{theorem}
\label{Thm-5}
Let Let $f$ be smooth function supported in $[X, 2X]\times[X, 2X]$ satisfying \eqref{0.1}. For $h \ll X^{1-\epsilon},$ We have
\begin{equation}
\label{Thm-5-eq}
D_{f}(a,1 ; h):= \sum_{an-m=h}d(n)d(m)f(an,m)= \text{\textit{ Main term}}(f, a, 1) + O(X^{1/2+ \theta+ \epsilon}),
\end{equation}
where the Main term stated in the Equation \eqref{main-thm5}.
\end{theorem}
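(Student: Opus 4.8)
The plan is to follow the strategy announced in the introduction: detect the relation $an-m=h$ by the Duke--Friedlander--Iwaniec $\delta$-method \cite{DFI}, open the $d(m)$-sum by the Voronoi summation formula so as to raise a Kloosterman sum, and then, in place of the Weil bound, extract cancellation by the Kuznetsov trace formula \cite{Des-Iwa-Kuz} and the spectral large sieve. Concretely I would begin from
\[
D_{f}(a,1;h)=\sum_{n,m}d(n)d(m)f(an,m)\,\delta(an-m-h),
\]
and insert $\delta(r)=\sum_{q}q^{-1}\sum_{c\bmod q}^{*}e(cr/q)\,h_{q}(r)$, where $h_{q}$ is smooth and effectively localizes the moduli to $q\ll Q$ with $Q$ a power of $X^{1/2}$ to be optimized (the natural scale since $an,m\asymp X$). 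Absorbing $f\cdot h_{q}$ into a smooth weight $F_{q}$ satisfying bounds of the type \eqref{0.1}, this gives
\[
D_{f}(a,1;h)=\sum_{q\ll Q}\frac{1}{q}\sum_{c\bmod q}^{*}e\!\Big(\tfrac{-ch}{q}\Big)\sum_{n}d(n)\,e\!\Big(\tfrac{can}{q}\Big)\sum_{m}d(m)\,e\!\Big(\tfrac{-cm}{q}\Big)F_{q}(an,m).
\]

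Next I would apply Voronoi summation to the inner $m$-sum only, treating $n$ as a parameter. The crucial point is that the additive frequency of this sum is $-c/q$ with $(c,q)=1$, so the transformation is insensitive to $\gcd(a,q)$; this is exactly what allows the coefficient $a$ to be carried along without any coprimality or divisibility hypothesis. The Voronoi main term, paired with the Ramanujan sum $\sum_{c\bmod q}^{*}e(c(an-h)/q)=c_{q}(an-h)$, feeds the main term, which after summation over $q$ and a further standard evaluation of the residual $n$-sum yields precisely $\text{Main term}(f,a,1)$ of \eqref{main-thm5}, in parallel with the $a=1$ computation of Theorem~\ref{Thm-1}. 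The Voronoi dual term replaces $e(-cm/q)$ by $e(\overline{c}\,m'/q)$ against a Bessel transform $\widetilde{F}_{q}(an,m')$, so that the character sum in $c$ collapses to
\[
\sum_{c\bmod q}^{*}e\!\Big(\tfrac{c(an-h)+\overline{c}\,m'}{q}\Big)=S(an-h,\,m';\,q),
\]
with both arguments honest integers for fixed $n$ and $m'$ (and $an-h>0$ since $an\asymp X\gg h$). Here the hypothesis $b=1$ is essential: for a general modulus $b$ one would obtain $S(\overline{b}(an-h),m';q)$, whose first argument carries an inverse residue mod $q$ that obstructs the trace formula.

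The core of the argument is then the dual contribution
\[
\sum_{q\ll Q}\frac{1}{q}\sum_{n}\sum_{m'}d(n)d(m')\,S(an-h,\,m';\,q)\,\widetilde{F}_{q}(an,m').
\]
For fixed $n$ and $m'$ I would open $\sum_{q}q^{-1}S(an-h,m';q)\,\phi(q)$ by Kuznetsov, converting it to a spectral sum over the Hecke--Maass basis (with holomorphic and Eisenstein contributions) whose coefficients are $\rho_{j}(an-h)\overline{\rho_{j}(m')}$, weighted by a Bessel transform of $\phi$. Summing back over $n$ against $d(n)$ and over $m'$ against $d(m')$, and applying the spectral large sieve to the resulting linear forms $\sum_{n}d(n)\rho_{j}(an-h)$, produces cancellation beyond the Weil bound $\sqrt{q}$. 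The exceptional eigenvalues $\lambda_{j}=\tfrac14-t_{j}^{2}<\tfrac14$, with $t_{j}=i\nu_{j}$ and $0<\nu_{j}\le\theta$, are exactly what contributes the factor $X^{\theta}$ and hence the exponent $1/2+\theta+\epsilon$, the same exponent as in the cuspidal analogue \cite{Bl}; under Ramanujan--Petersson ($\theta=0$) the exceptional spectrum is empty and one recovers $X^{1/2+\epsilon}$.

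I expect the main obstacle to lie in this last step: arranging the Bessel weight $\widetilde{F}_{q}$ and its $q$-dependence $\phi$ so that Kuznetsov applies cleanly, controlling the localization of the dual variable $m'$ and the modulus $q$ imposed by the oscillation of the Bessel transforms, and balancing the spectral large-sieve bound against the $\delta$-method cutoff $Q$ uniformly in the range $h\ll X^{1-\epsilon}$. It is in this spectral bookkeeping, together with the separate but routine verification that the various Voronoi main terms assemble into \eqref{main-thm5}, that essentially all of the technical work resides, and where the precise dependence on $\theta$ is pinned down.
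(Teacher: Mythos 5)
Your proposal takes a genuinely different route from the paper, and unfortunately the step you yourself flag as the ``main obstacle'' is where it collapses. First, for contrast: the paper does \emph{not} use the $\delta$-method for Theorem \ref{Thm-5} at all (the introduction and the remark following \eqref{1.4} explain why the circle method is abandoned here). Instead it opens $d(n)$ by a smoothed Dirichlet hyperbola identity, $d(n)=\sum_{\delta\mid n}\omega(\delta/\sqrt{Q})\bigl(2-\omega(n/(\delta\sqrt{Q}))\bigr)$, which turns $\sum_{an-m=h}d(n)d(m)f(an,m)$ into sums of $d(m)$ over the arithmetic progressions $m\equiv -h \ (\mathrm{mod}\ a\delta)$. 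Voronoi summation for the divisor function in progressions (Corollary 4.12 of \cite{Iw-kow}) then yields, as in \eqref{6.2}, Kloosterman sums $S(h,n;q)$ whose first argument is the \emph{fixed} shift $h$ and whose second argument $n$ is a short dual variable ($n\ll q^{2}X^{3\epsilon-1}$ by Lemma \ref{lem-6.2}); the moduli run over multiples of $\sigma d$, $d=(a,q)$, and Blomer's Lemma \ref{Lemma-Blo} performs the averaging.

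Your route avoids the inverse $\overline{a}$ entering the Kloosterman sum (correctly exploiting $b=1$ by dualizing only the $m$-sum), but it pays a fatal structural price: the $n$-variable is never dualized, so your Kloosterman sums $S(an-h,m';q)$ carry a \emph{long} argument $an-h\asymp X$ ranging over $\asymp X/a$ values, instead of one fixed argument and one short one. No available averaging tool then saves anything. Quantitatively, applying Lemma \ref{Lemma-J} with the roles $h\mapsto m'\ll X^{\epsilon}$ (fixed) and $p\mapsto an-h$, so $P\asymp X$ and $\|a_p\|_2\asymp (X/a)^{1/2+\epsilon}$, against a weight of size $\asymp X^{-1/2}$ (for $q\asymp Q\asymp X^{1/2+\epsilon}$), gives an error $\ll X^{-1/2}\cdot Q\,P^{1/2}\|a_p\|_2\,X^{\epsilon}\asymp X^{1+\epsilon}/\sqrt{a}$, which exceeds the main term $\asymp X/a$ and is worse than trivial; optimizing $Q$ does not change this. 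Your spectral variant fails for the same reason: after Kuznetsov the relevant Bessel argument is $\sqrt{(an-h)m'}/q\ll X^{\epsilon}$, so the spectral sum is supported on $t_j\ll X^{\epsilon}$, and the large sieve bound $\sum_{|t_j|\ll T}\bigl|\sum_n d(n)\rho_j(an-h)\bigr|^{2}\ll (T^{2}+X)\sum_n d(n)^{2}$ with $T\ll X^{\epsilon}$ offers no averaging gain over a handful of forms; getting cancellation in a single linear form $\sum_n d(n)\rho_j(an-h)$ is itself a shifted-convolution problem of the same strength as the one being solved, so the argument is circular. Note also that your appeal to ``the $a=1$ computation of Theorem \ref{Thm-1}'' is misleading: there Voronoi is applied to \emph{both} variables, so both dual variables are $\ll X^{3\varepsilon}$ and Lemma \ref{Lemma-J} is used with $P$ tiny; it is precisely this double dualization that is blocked when $a\neq 1$, and the paper's hyperbola-method reformulation is the device that restores the favorable structure (fixed $h$, short dual variable) without any circle method.
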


\noindent Note that $\theta < 7/64 \approx 0.109375.$ This unconditionally improves the error term $O(X^{0.75+ \epsilon})$ of \cite{DFI} to $O(X^{0.6094}),$ and under the Ramanujan-Petersson conjecture to $O(X^{0.5+ \epsilon}).$\\
\\
\noindent In general to detect the condition $an-bm=h$ in the sum \eqref{0.2}, one needs to use some variant of the circle method. There are two major version of the circle method that can be used in shifted convolution problems. The $\delta$-method was invented by Duke, Friedlander and Iwaniec \cite{DFI-1}. They used it to solve the shifted convolution problem arising in breaking the  convexity bound on $L$-functions associated to holomorphic cusp forms. Their idea developed in many other papers to break the convexity bound on $L$-functions and the applications that will follow from breaking these bounds. (For more information see \cite{Kow-Mich-Vik}, \cite{Ph-Mich}). Another method used frequently in such problems is known as the Jutila circle method \cite{J}, which as mentioned earlier also has applications on shifted convolution sums for $GL(3)\times GL(2)$ \cite{Munshi}. In addition to these there is also a method using spectral theory which was suggested by Selberg \cite{Selberg} and made effective and general by Sarnak \cite{Sarnak}. However we cannot use any of these methods here because, in the $\delta$-method the inverses of $a$ and $b$ would enter in the Kloosterman sums. This would make it impossible to average the Kloosterman sums. As for the Jutila circle method and the spectral theory method we cannot use them since it would only work well with Fourier coefficients of modular forms. Here we use more elementary method that originally goes back to Heath-Brown and was used by Meurman in \cite{Meu}. 
\subsection{Generalized shifted divisor problem}
In the previous section we mentioned the connection between the quadratic divisor problem and sub-convexity bounds for families of $L$-functions. Breaking the convexity bound is a step forward towards the Lindel\"{o}f hypothesis (Equation \eqref{Lindelof}) for these $L$-functions.  Now let
\begin{equation}
M_k(T)= \int_{0}^{T} \big|\zeta\big(\tfrac{1}{2}+ it\big)\big|^{2k}dt
\end{equation}
be the $k$-th moment of the Riemann zeta function.  The Lindel\"{o}f hypothesis for the Riemann zeta function is equivalent to the statement that
\begin{equation}
\label{Lind}
 M_k(T)\ll T^{1+\epsilon},
\end{equation}
for all positive integers $k$ and all positive real numbers $\epsilon$. There is a close connection
between the generalized shifted divisor problem and the moments of the zeta function. Here we define the generalized shifted divisor problem, or $(k, l)$-shifted divisor problem, as finding non-trivial estimates for the sum
\begin{equation}
\label{gen-shift}
\sum_{n-m=h} d_k(n)d_l(m)f(n, m),
\end{equation}
where $d_k(n)= \# \{ (d_1 \cdots d_k)\in \mathbb{N}^k : d_1 \cdots d_k=n \},$ and $f$ is as \eqref{0.1}. There are conflicting conjectures regarding the size of the error term in  $(k, k)$-shifted divisor problem. 
\begin{conjecture}
Vinogradov \cite{A.V} conjectured \begin{equation}
\label{Vinog}
\sum_{n-m=h} d_k(n)d_k(m)f(n, m)= \text{\textit{Main term}}(f)+ O(X^{1-\frac{1}{k}}).
\end{equation}
\noindent Furthermore, Ivic \cite{Iv1} suggested that the $O$  term in \eqref{Vinog} should be replaced by $\Omega.$  Contrary to Ivic's conjecture, Conrey and Gonek's \cite{C.G} conjectured the following.
\label{CG-co}
\begin{equation}
\label{CG-con}
\sum_{n-m=h} d_k(n)d_k(m)f(n, m)= \text{\textit{Main term}}(f)+ O(X^{\frac{1}{2} +\epsilon}).
\end{equation}
\end{conjecture}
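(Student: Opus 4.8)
The plan is to approach the $(k,k)$-shifted divisor sum by the same circle-method-plus-Voronoi strategy that drives the $k=2$ results of this paper, while being explicit that for $k\ge 3$ it is precisely the last step where the problem remains open. First I would detect the condition $n-m=h$ by the $\delta$-method of Duke, Friedlander and Iwaniec \cite{DFI-1}, expressing the characteristic function of $\{n-m=h\}$ as an average over moduli $q$ of additive characters $e\big((n-m-h)a/q\big)$ against a smooth weight. This factors the left-hand side of \eqref{Vinog} into an average over $q$ and $a \bmod q$ of two twisted $d_k$-sums, namely $\sum_n d_k(n)\,e(an/q)\,g_1(n)$ and $\sum_m d_k(m)\,e(-am/q)\,g_2(m)$, where $g_1,g_2$ are smooth bumps of length $\asymp X$.

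Next I would apply the $GL(k)$ Voronoi summation formula to each twisted sum. Voronoi converts the additive character $e(an/q)$ into a hyper-Kloosterman sum $\mathrm{Kl}_k(\cdot\,;q)$ and transforms the smooth weight by an explicit integral kernel of Meijer $G$-type. The zero-frequency terms on both sides assemble into the predicted main term; matching this against the Conrey--Gonek recipe should yield $\text{Main term}(f)$ of size $X$ times a polynomial in $\log X$ of degree $2k-2$, consistent with the quadratic polynomial in the solved case $k=2$. Assembling this main term, while delicate, is the more tractable part of the argument.

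The crux is the off-diagonal contribution: after Voronoi on both sides one is left with an average over $q$ of products of two hyper-Kloosterman sums weighted by oscillatory integrals. For $k=2$ these are ordinary Kloosterman sums, and the Kuznetsov trace formula \cite{Des-Iwa-Kuz}, exactly as used for Theorem~\ref{Thm-1}, supplies cancellation in the average over $q$ that turns the trivial bound into a power saving. For $k\ge 3$ no comparable spectral input is available: Deligne's bound gives square-root cancellation inside each individual $\mathrm{Kl}_k$, but extracting additional cancellation from the sum over moduli $q$ would require a $GL(k)$ Kuznetsov-type formula, or equivalently nontrivial estimates for the associated Kloosterman-sum zeta functions, neither of which is known in the needed strength. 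This is the main obstacle, and it is the reason the statement is conjectural for $k\ge 3$.

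Finally, I would stress that even the shape of the true error term is contested: Ivi\'c's \cite{Iv1} suggestion of an $\Omega$-result clashes with the $O(X^{1/2+\epsilon})$ prediction of Conrey and Gonek \cite{C.G} and with Vinogradov's $O(X^{1-1/k})$ prediction \cite{A.V}, so any successful method must be sharp enough to distinguish between these regimes. A realistic partial program is therefore to first establish the asymptotic for $k=3$ with some power-saving error, using the $GL(3)$ Voronoi formula together with the best available bounds for averages of $\mathrm{Kl}_3$, and to regard the general $k$ and the sharp exponents in \eqref{Vinog} and \eqref{CG-con} as beyond current technology.
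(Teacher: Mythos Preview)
The statement you were given is a \emph{conjecture}, not a theorem; the paper does not prove it and does not claim to. There is no proof to compare against. Your proposal correctly recognizes this, since you explicitly identify the missing spectral input for $k\ge 3$ and describe the sharp exponents as ``beyond current technology.'' In that sense there is no gap to flag: you have written a heuristic program, not a proof, and that is all that is possible here.

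That said, your heuristic route differs from the one the paper actually pursues in the discussion following the conjecture. You propose attacking the $(k,k)$-sum directly via the $\delta$-method and the $GL(k)$ Voronoi formula, which produces products of hyper-Kloosterman sums $\mathrm{Kl}_k$ and leaves you needing a $GL(k)$ Kuznetsov-type average. The paper instead uses Lemma~\ref{lem-con} to write the $(k,l)$-shifted sum as an average of the \emph{quadratic} divisor sums $D_f(a,b;h)$ over the extra parameters $a,b$; the off-diagonal then consists of the already-studied errors $E_f(a,b,h)$ summed over $a,b$. Under the heuristic assumptions listed after Lemma~\ref{lem-con}, this leads the paper to argue that $E_f(a,b,h)=O(\sqrt{X}/ab)$ is the scenario consistent with Conrey--Gonek, while $E_f(a,b,h)=O(\sqrt{X/ab})$ matches only Vinogradov for $k=3$. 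Your approach has the advantage of being structurally cleaner for general $k$ and making the obstruction (no spectral theory for $\mathrm{Kl}_k$ averages) explicit; the paper's approach has the advantage of reducing everything to a single unknown, the dependence of $E_f(a,b,h)$ on $a,b$, and of giving a concrete reason to prefer Conrey--Gonek over Ivi\'c and Vinogradov.
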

\noindent Note that their formulation of the conjecture stated in the case that $f$ is the indicator function of $[X, 2X]\times[X, 2X].$ However in practice we need to consider $f$ as in \eqref{0.1}. Ivic's conjecture in the case $k=2$ has proven by Motohashi \cite{Mo} and Szydlo improved the result and showed the error term in the case $k=2$ is $\Omega_{\pm}(X^{1/2}).$  \\

We mentioned some of the results for the $(2, 2)$-shifted divisor problem in the first part of the introduction. For $k,l >2 $ this problem remains unsolved and seems to be extremely hard. For the case $(k,l)=(3,2)$ an asymptotic formula was obtained by Hooley~\cite{Hoo}. For the case $(k,2)$ an asymptotic formula was derived by Linnik~\cite{Linik} using the dispersion method. Motohashi improved on Linnik's result by saving a power of $\log X$ in the error term. Power saving in the error term was obtained by Friedlander and Iwaniec~\cite{Fr-Iw} in the case $(k,l)=(3,2)$. They showed that there exists $\delta > 0$ such that the error term is smaller than $X^{1-\delta}$. Heath-Brown~\cite{Heath} showed that
$\delta= 1/102$ holds. \\
\\
\noindent Here we describe a bridge between the quadratic divisor problem and the $(k,l)$-shifted divisor problem. We explain this by means of the following lemma.
\begin{lemma}
\label{lem-con}
Let $f$ be a compactly supported function defined on $\mathbb{R}^2$. We have that
\begin{equation}
\label{average-quad}
\sum_{a, b} D_{f}(a, b; h)= \sum_{a, b} \sum_{an-bm=h}d(n)d(m)f(an,bm)= \sum_{m-n=h} d_3(n)\hspace{1 mm}d_3(m)\hspace{1 mm}f(n, m).
\end{equation}
In general for $k, l \geq 2$ we have
\begin{align}
\notag \sum_{\substack{a_i \\ 1 \leq i \leq k-1}} \hspace{1 mm} \sum_{\substack{b_j \\ 1 \leq j \leq l-1}}   \sum_{a_1 \cdots a_{k-1}n-b_1 \cdots b_{l-1}m=h} d(n)\hspace{1 mm}d(m&)\hspace{1 mm} f(a_1 \cdots a_{k-1}n,\hspace{1 mm} b_1 \cdots b_{l-1}m)\\ &= \sum_{m-n=h} d_k(n)\hspace{1 mm}d_l(m)\hspace{1 mm}f(n, m).
 \end{align}
\end{lemma}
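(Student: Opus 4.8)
The plan is to prove both identities by a single elementary regrouping of the multiple sums; the only analytic input is that $f$ has compact support, so that for each fixed $h$ only finitely many of the terms are nonzero and every interchange of summation below is a rearrangement of a finite sum.

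First I would isolate the one Dirichlet-convolution identity that drives everything. Writing $\mathbf{1}$ for the constant function equal to $1$, we have $d=\mathbf{1}*\mathbf{1}$ and $d_r=\mathbf{1}^{*r}$, so that for any integer $j\geq 0$ and any $u\geq 1$,
\[
\sum_{a_1\cdots a_j\, n=u} d(n)=\Big(\underbrace{\mathbf{1}*\cdots*\mathbf{1}}_{j}*d\Big)(u)=d_{j+2}(u),
\]
the inner sum running over all $(j+1)$-tuples $(a_1,\dots,a_j,n)$ of positive integers with product $u$. In particular, a single auxiliary variable ($j=1$) gives $\sum_{an=u} d(n)=d_3(u)$.

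For the first identity I would pass to the variables $u=an$ and $v=bm$, summing over $a,b,n,m$ by first fixing $u$ and $v$. The constraint $an-bm=h$ becomes $u-v=h$, the test function becomes $f(u,v)$, and the weight factorises as $\big(\sum_{an=u} d(n)\big)\big(\sum_{bm=v} d(m)\big)=d_3(u)\,d_3(v)$. Hence
\[
\sum_{a,b} D_f(a,b;h)=\sum_{u-v=h} d_3(u)\,d_3(v)\,f(u,v),
\]
which is precisely the claimed $(3,3)$-correlation after relabelling the dummy variables.

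The general identity follows by the same grouping carried out with several auxiliary variables on each side: setting $u=a_1\cdots a_{k-1}\,n$ and $v=b_1\cdots b_{l-1}\,m$, the difference condition again collapses to the single relation $u-v=h$, and the convolution identity above turns the inner sums into the appropriate higher divisor functions of $u$ and of $v$. Since no cancellation is exploited, there is no genuine obstacle in the argument; the only point that needs care is the index bookkeeping --- keeping track that $j$ free factors together with the weight $d$ produce $d_{j+2}$, and thereby matching the number of auxiliary variables on each side to the target functions $d_k$ and $d_l$ --- together with the remark that the compact support of $f$ renders all of these rearrangements finite and hence valid.
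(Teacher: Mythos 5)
The paper states this lemma without proof, so the only question is whether your argument stands on its own. Your mechanism is the right one: the Dirichlet-convolution identity $\sum_{a_1\cdots a_j n=u} d(n)=(\mathbf{1}^{*j}*d)(u)=d_{j+2}(u)$, together with the regrouping $u=an$, $v=bm$, proves the first display correctly, and the compact support of $f$ indeed makes every rearrangement a finite one. (One small point: the regrouping produces $\sum_{u-v=h}d_3(u)\,d_3(v)\,f(u,v)$, whereas the statement writes $\sum_{m-n=h}d_3(n)\,d_3(m)\,f(n,m)$, in which the first argument of $f$ is the smaller variable; since $f$ is not assumed symmetric, this is an argument-order typo in the statement, which you should flag explicitly rather than absorb into ``relabelling the dummy variables.'')

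The general case, however, contains a real error, and it sits exactly at the step you dismiss as ``index bookkeeping.'' With $u=a_1\cdots a_{k-1}n$ you have $j=k-1$ free factors, so your own identity gives
\begin{equation*}
\sum_{a_1\cdots a_{k-1}n=u} d(n)=d_{k+1}(u),
\end{equation*}
not $d_k(u)$. Thus the argument you describe proves that the left-hand side equals $\sum_{u-v=h}d_{k+1}(u)\,d_{l+1}(v)\,f(u,v)$, which is not the stated right-hand side. Equivalently, setting $k=l=2$ in the general statement (one auxiliary variable on each side) would yield $\sum_{a,b}D_{f}(a,b;h)=\sum_{m-n=h} d(n)\,d(m)\,f(n,m)$, contradicting the first display you just proved. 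The statement as printed is off by one: the products should involve $a_1\cdots a_{k-2}$ and $b_1\cdots b_{l-2}$ (or else the right-hand side should read $d_{k+1}\,d_{l+1}$). A correct write-up must either prove the corrected statement or record this discrepancy; asserting, as you do, that $k-1$ auxiliary factors ``match the target functions $d_k$ and $d_l$'' is false, so as written your proof of the second identity does not go through.
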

This lemma essentially shows that by summing the $D_{f}(a, b; h)$ over $a$ and $b$ we can study the generalized shifted divisor problem. Therefore the error term in $(k,l)$-shifted divisor problem is the sum of the error terms in the quadratic divisor problem (sum of $E_{f}( a, b, h)$ over $a, b$ in the Equation \eqref{DFI-th}). This brings us to the following crucial question. \\
\\ \textbf{Question:} {What is the size of the error $E_{f}( a, b, h)$?}\\
\\
We may assume the following plausible assumptions:
\begin{enumerate}
\item The function $E$ as a function of $a, b$ oscillates mildly with respect to changes in $a, b$. This means that if $\parallel (a, b)-(c, d)\parallel_{_2}$ is small then $E_{f}( a, b, h)$ and $E(f, c, d, h)$ have about the same size.
 \item For $a, b \ll 1$, $E_{f}( a, b, h)=O(X^{1/2+\epsilon}).$
\item We assume that it possible to restrict the sum of $E_{f}( a, b, h)$ over $a, b$ in \eqref{average-quad} to the region $ab\ll X.$
\end{enumerate}
\noindent Using these heuristics we may conclude that either $E_{f}( a, b, h)= O(\sqrt X/ab)$ or $E_{f}( a, b, h)=O(\sqrt{X/ab}).$ Note that $E_{f}( a, b, h)= O(\sqrt X/ab)$ matches very well with Conrey and Gonek's conjecture for $(k, k)$-shifted divisor problem (equation \eqref{CG-con}). While assuming $E_{f}( a, b, h)= O(\sqrt{X/ab})$ only matches Vinogradov's conjecture for $(3, 3)$-shifted divisor problem. Moreover, to get the Vinogradov's conjecture (equation \eqref{Vinog}) for general $k$ one needs to assume an specific cancellations between $E_{f}( a, b, h)$ when we sum over $a, b$. This argument shows that the conjecture of Conrey and Gonek on the order of magnitude of the error terms in $(k,k)$-shifted divisor problem seems to be more accurate than the Vinogradov and Ivic's conjectures. \\
\\
\noindent  We conclude this section with pointing out the connection between the $(k, k)$-shifted divisor problem and the Lindel\"{o}f hypothesis for the Riemann zeta function. Ivic \cite{Iv1} has  shown that if \eqref{Vinog} holds for $k=3$ then \eqref{Lind} holds for  $k=3.$ Moreover, in \cite{Iv2} he proved that if we assume that the error term, in average over $h$ in \eqref{CG-con}, has square root cancellation, then the Lindel\"{o}f hypothesis for the Riemann zeta function would follow. \\
\\
\noindent \textbf{Structure of the paper and notations.} We will proceed first with introducing the $\delta$-Method and then using the Voronoi summation formulas to form a Kloosterman sums inside the the formulation derived with the $\delta$-Method. After that we will prove the necessary conditions that are needed for using the Kuznetsov formula in averaging the Kloosterman sums. We conclude the paper with treating the quadratic divisor problem with a different formulation but somehow similar with method used in the binary divisor problem. Note that throughout the paper we consider $h \ll X^{1-\epsilon}.$\\
\\
\noindent  \textbf{Kloosterman sum.} Let $m, n, q$ be natural numbers and $e(x)=e^{2\pi i x}$. The exponential sum 
\begin{equation}
 S(m ,n ;q)=\sum_{\substack{1\leq x <q \\ (x, q)=1}} e\big(\frac{mx+nx^{-1}}{q}\big).
\end{equation}
is called the Kloosterman sum. Weil \cite{A.W} proved that $$ S(m ,n ;q) \leq d(q) \sqrt{\gcd(m, n, q)}\sqrt{ q}.$$
Althought the Weil bound is optimal, on average the Kloosterman sum has a size about $q^{\epsilon}.$ This follows from the Kuznetsov formula. This is one of the major ideas used throughout the paper.\\
\noindent \textbf{Bessel functions.}  Throughout this article we make extensive use of the standard Bessel functions. They are defined as follows:
\begin{align*}
 &J_n(z)= \sum_{k=0}^{\infty} \frac{(-1)^{k}(z/2)^{2k+n}}{k!(n+k)!}, \\
& Y_n(z)=-\pi^{-1}\sum_{k=0}^{n-1} \frac{(n-k-1)!}{k!}(z/n)^{2k-n} \\ + & \pi^{-1}\sum_{k=0}^{\infty} \frac{(-1)^k(n-k-1)!}{k!}(2 \log(z/2)- \frac{\Gamma^{\prime}}{\Gamma}(k+1)- \frac{\Gamma^{\prime}}{\Gamma}(k+n+1)),\\ & K_n(z)=\frac{1}{2}\sum_{k=0}^{n-1} \frac{(-1)^k(n-k-1)!}{k!}(z/n)^{2k-n} \\ & + \frac{(-1)^{n-1}}{2}\sum_{k=0}^{\infty} \frac{(-1)^k(n-k-1)!}{k!}(2 \log(z/2)- \frac{\Gamma^{\prime}}{\Gamma}(k+1)- \frac{\Gamma^{\prime}}{\Gamma}(k+n+1)).
\end{align*}
Moreover we use the following properties.  $\displaystyle{\big(z^v Y_{v}(z)\big)^{\prime}=z^v Y_{v-1}(z)},$  $\displaystyle{\big(z^v K_{v}(z)\big)^{\prime}=-z^v K_{v-1}(z)},$ and $\displaystyle{\big(z^v J_{v}(z)\big)^{\prime}=z^v J_{v-1}(z)}.$ We also use the following bounds from \cite{Kow-Mich-Vik}(Lemma C.2). For $z>0$ and $k \geq 0$
\begin{align}
\label{Bess-deriv-est}
& \big(\frac{z}{1+z}\big)^{i}Y^{(i)}_0(z) \ll \frac{(1+ |\log z|)}{(1+z)^{1/2}}, \\ &
\label{Bess-deriv-est-k}
\big(\frac{z}{1+z}\big)^{i}K^{(i)}_0(z) \ll \frac{e^{-z}(1+ |\log z|)}{(1+z)^{1/2}}.
\end{align}
For further properties of Bessel functions see \cite{Bess}.
\section{\textbf{$\delta$-method}}
In this section we follow \cite{DFI} to introduce and set up the $\delta$-method. Let $Q>0$ and $w(u)$ be an even, smooth, compactly supported function on $Q \leq |u|\leq 2Q$ and
\begin{equation}
 \label{der-w}
 w^{(i)}(u) \ll \frac{1}{Q^{i+1}}, \text{        } \text{        } \text{        }\sum_{q=1}^{\infty}w(q)=1.
\end{equation}
The $\delta$ function is defined on $\mathbb{Z}$ by $\delta(0)=1$ and $\delta(m)=0$ for $m\neq 0.$ The $\delta$-method is a decomposition of the $\delta$ function in terms of additive characters $e(\cdot)$ on rational numbers. More precisely we have:
\begin{equation}
\label{def-delta}
\delta(m)= \sum_{q=1}^{\infty} \hspace{2 mm} \sum^{\hspace{ 5mm} *}_{d \text{ (mod }q\text{)}}e\big(\frac{md}{q}\big)\Delta_q(m),
\end{equation}
 where $$\Delta_q(m)=\sum_{r=1}^{\infty}\frac{w(qr)-w(\frac{m}{qr})}{qr}.$$
Let $f(x, y)$ be a differentiable function supported in $ [X, 2X]\times[X, 2X] $ satisfying \eqref{0.1}.
Let $\phi$ be a smooth function supported on $[-X, X]$ with the property that $\phi^{(i)} \ll X^{-i}.$ By applying \eqref{def-delta} to detect the condition $m-n=h$ in \eqref{Thm-1-eq} we have
\begin{equation}
\label{delt-thm-1}
\sum_{n-m=h}d(n)d(m)f(n, m)= \sum_{q<Q} \sum_{d \text{ mod }q}^{ \hspace{ 5mm} *} e\big(\frac{-hd}{q} \big) \sum_{m, n}d(m)d(n) e\big(\frac{dn-dm}{q} \big)E(n, m, q),
\end{equation}
where
\begin{equation}
\label{E}
E(x,y, q)=f(x, y)\phi(x-y-h)\Delta_q(x-y-h).
\end{equation}
 For the left hand side of \eqref{Thm-2-eq}, \eqref{Thm-3-eq} and \eqref{Thm-4-eq} we have similar formula.
\section{\textbf{Voronoi summation formulas}}
Let $f(n)$ be an arithmetic function, let $q$ be an integer, and let $g(n)$ be a compactly supported function on $\mathbb{R^+}.$ For $(d, q)=1$ we have
\begin{equation}
\label{eq2.1}
\sum_{n} f(n)e\big(\frac{nd}{q}\big)g(n)= \sum_{a \text{ mod }q} e\big(\frac{a}{q}\big) \sum_{n \equiv \overline{d} a \text{ mod }q} f(n)g(n).
\end{equation}
Now if for $a \neq 0,$ $f(n)$ has some sort of well distribution modulo $q$ one can study the main term and the error term in \eqref{eq2.1}. The general Voronoi summation formula studies the sum of the type \eqref{eq2.1} for certain sequences. The idea started with Voronoi in \cite{Vrn}. Here we state the Voronoi summation formula for  $d(n), \tau_{\chi}(n), r(n)$ and $a(n).$
\begin{lemma}
\label{Vor-Lemma}
Let $g(x)$ be a smooth, compactly supported function on $\mathbb{R}^{+}$ and let $(d, q)=1.$ We have
\begin{equation}
\sum_{n=1}^{\infty} d(n)e\big(\frac{nd}{q} \big)g(n)= \frac{1}{q}  \int_{0}^{\infty} (\log x + 2\gamma - \log q)g(x)dx + \sum_{i=1}^{2}\sum_{n=1}^{\infty}e\big(\frac{\pm n\overline{d}}{q} \big)g_{i}(n),
\end{equation}

where
$$g_{1}(n)= -\frac{2 \pi}{q} \int_{0}^{\infty} g(x) Y_{0}\big(\frac{4 \pi \sqrt{nx}}{q}\big) dx$$
$$g_{2}(n)= \frac{4}{q} \int_{0}^{\infty} g(x) K_{0}\big(\frac{4 \pi \sqrt{nx}}{q}\big)dx.$$
If $(c, q)=1$, for $\tau_{\chi}(n)$, where $\chi$ is an odd Dirichlet character modulo $c$, we have
\begin{align}
\label{(p, q)=1}
\sum_{n=1}^{\infty} \tau_{\chi}(n)e\big(\frac{nd}{q} & \big)g(n)= \frac{\chi(q)}{q} L(1, \chi)  \int_{0}^{\infty} g(x)dx  \\ & \notag -2 \pi \frac{\chi(q)}{q}\frac{\tau(\chi)}{c}\sum_{n=1}^{\infty}\tau_{\chi}(n)e\big(\frac{- n\overline{dc}}{q} \big)\int_{0}^{\infty} g(x) J_{0}\big(\frac{4 \pi \sqrt{nx}}{\sqrt{c}q}\big)dx,
\end{align}
if $c|q$ we have
\begin{align}
\label{p|q}
\sum_{n=1}^{\infty} \tau_{\chi}(n)e\big(\frac{nd}{q} & \big)g(n)= \frac{\chi(\overline d)}{q} \tau(\chi) L(1, \overline \chi)  \int_{0}^{\infty} g(x)dx  \\ & \notag -2 \pi i \frac{\chi(\overline d)}{q}\sum_{n=1}^{\infty}\tau_{\chi}(n)e\big(\frac{- n\overline{d}}{q} \big)\int_{0}^{\infty} g(x)J_{0}\big(\frac{4 \pi \sqrt{nx}}{q}\big)dx,
\end{align}
and if $c=4$ and $q \equiv 2 \text{ mod } 4$ we have
\begin{align}
\sum_{n=1}^{\infty} r(n)e\big(\frac{nd}{q}  \big)g(n)=  -2 \pi i \frac{\chi(\overline d)}{q}\sum_{n=1}^{\infty}r^{*}(n)e\big(\frac{- n\overline{d}}{2q} \big)\int_{0}^{\infty} g(x)J_{0}\big(\frac{4 \pi \sqrt{nx}}{q\sqrt 2}\big)dx,
\end{align}
where
\begin{equation}
\label{r^*}
r^{*}(n)= \sum_{m_1m_2=n} \chi_4(m_1)\big(1- (-1)^{m_1}\big)
\end{equation}
Finally, for Fourier coefficients of weight $k$ cusp form we have
\begin{align}
\sum_{n=1}^{\infty} & a(n)e\big(\frac{nd}{q}  \big)g(n)=  \frac{-2 \pi i^k}{q}\sum_{n=1}^{\infty}a(n)e\big(\frac{- n\overline{d}}{q} \big)\int_{0}^{\infty} g(x)J_{k-1}\big(\frac{4 \pi \sqrt{nx}}{q}\big)dx.
\end{align}
Here $Y_0, K_0$ and $J_k$ are Bessel functions.
\end{lemma}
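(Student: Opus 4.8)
The plan is to treat all four identities by one mechanism: pass to the additively twisted Dirichlet series, use its analytic continuation and functional equation, and read off the main terms as residues and the dual sums as inverse Mellin transforms of the gamma factors. Write $\tilde g(s)=\int_0^\infty g(x)x^{s-1}\,dx$ for the Mellin transform of $g$, which, since $g$ is smooth and compactly supported on $\mathbb{R}^+$, is entire and decays faster than any polynomial in vertical strips. For a fixed arithmetic function $f$ and $(d,q)=1$, set $D_f(s,d/q)=\sum_{n\ge 1} f(n)e(nd/q)n^{-s}$, so that by Mellin inversion $\sum_n f(n)e(nd/q)g(n)=\frac{1}{2\pi i}\int_{(\sigma)}\tilde g(s)D_f(s,d/q)\,ds$ for $\sigma$ large. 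I would then shift the contour to $\Re s=1-\sigma$, picking up the residues at the poles of $D_f$ (these produce the main terms), and on the shifted line substitute the functional equation $D_f(s,d/q)=(\text{gamma factors})\,D_f(1-s,\mp\overline d/q)$.

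Each of the four cases is then a matter of supplying the relevant functional equation. For $d(n)$ the series $D_d(s,d/q)$ is the Estermann zeta function, built from $\zeta(s)^2$; it has a double pole at $s=1$, and the Laurent expansion against $\tilde g(s)$ yields exactly the main term $\tfrac1q\int_0^\infty(\log x+2\gamma-\log q)g(x)\,dx$, the $-\log q$ and $2\gamma$ emerging from the $q$-power and the constant term of the gamma factors. For $\tau_\chi(n)$ and $r(n)$ I would instead open the additive twist through \eqref{eq2.1} into residue classes modulo $q$ and apply the functional equation of the Dirichlet $L$-function $L(s,\chi)$ (equivalently of the Hurwitz zeta function); here the simple pole of $\zeta(s)$ at $s=1$ gives the single main term, and the Gauss sum $\tau(\chi)$ together with the Ramanujan-type sums over the residue classes produces the explicit constants $\chi(q)\tau(\chi)/c$ and $\chi(\overline d)\tau(\chi)$ that distinguish the coprime case $(c,q)=1$ from the divisible case $c\mid q$. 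The case $r(n)$ with $c=4$ and $q\equiv 2\pmod4$ is the same computation with $\chi=\chi_4$, except that the parity mismatch between the modulus $4$ and the even $q$ forces the correction $r^{*}(n)$ of \eqref{r^*}, which discards the even part of the inner divisor and accounts for the shift to frequency $\overline d/2q$ and argument $4\pi\sqrt{nx}/(q\sqrt2)$. For $a(n)$ the series is the entire Hecke $L$-function $L(s,f)$ of \eqref{cusp}, so there is no residue and hence no main term, only the dual sum.

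The analytic heart common to all cases is the identification of the shifted integral as a Bessel transform. After inserting the functional equation and expanding $D_f(1-s,\cdot/q)$ back into a Dirichlet series $\sum_m f(m)e(\mp m\overline d/q)m^{s-1}$, interchanging summation and integration reduces the claim to the inverse Mellin transforms of the gamma factors. These are the classical Mellin pairs $\int_0^\infty x^{s-1}J_\nu(x)\,dx=2^{s-1}\Gamma(\tfrac{\nu+s}{2})/\Gamma(\tfrac{\nu-s}{2}+1)$ and $\int_0^\infty x^{s-1}K_0(x)\,dx=2^{s-2}\Gamma(s/2)^2$, together with the companion formula for $Y_0$. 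The self-dual $\Gamma(s/2)^2$-type factor coming from $\zeta(s)^2$ splits, according to the two signs of the frequency, into the $Y_0$ kernel of $g_1$ and the $K_0$ kernel of $g_2$; the odd-character factor gives the $J_0$ kernel; and the Hecke factor $\Gamma(s+\tfrac{k-1}{2})/\Gamma(\tfrac{k+1}{2}-s)$ gives $J_{k-1}$. A change of variables $x\mapsto$ (the argument of the Bessel function) then puts the kernels in the stated normalizations $4\pi\sqrt{nx}/q$, $4\pi\sqrt{nx}/(\sqrt c\,q)$, and so on.

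I expect the main obstacle to be the bookkeeping of the explicit constants rather than any single hard estimate: pinning down the exact coefficients $-2\pi/q$, $4/q$, $-2\pi\chi(q)\tau(\chi)/(cq)$, the powers $i^k$ and $i$, and in particular isolating why the coprime and divisible cases of $\tau_\chi$ produce genuinely different normalizations. The $r(n)$ case with $q\equiv 2\pmod4$ is the most delicate, since the interaction of the modulus $4$ with the factor $2$ in $q$ is precisely what generates $r^{*}(n)$; verifying that the even divisors cancel and that the effective modulus becomes $q\sqrt2$ in the Bessel argument will require the most care. Convergence and the legitimacy of the contour shift are routine given the rapid decay of $\tilde g(s)$ and the polynomial growth of $D_f$ on vertical lines (which follows from the functional equations and the convexity bound), so I would defer those justifications to standard references.
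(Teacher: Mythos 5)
Your Mellin-transform/functional-equation plan is the classical route to Voronoi summation, and for three of the four identities it would work: for $d(n)$ via the Estermann function, for $\tau_\chi$ with $(c,q)=1$ or $c\mid q$, and for $a(n)$ via Hecke's argument. But note that the paper does not prove those cases at all; it cites Jutila \cite{Jut-d(n)} and Chapter 4 of \cite{Iw-kow} for them. The only case the paper actually proves --- the only genuinely new content of the lemma --- is $r(n)$ with $c=4$ and $q\equiv 2\pmod 4$, and that is exactly the case your proposal does not carry out. The paper's argument there is elementary: write $r$ as the convolution $\mathbf{1}*\chi_4$, split $m_1$ modulo $2q$ (not modulo $q$; this is forced because $\chi_4(m_1)$ is only periodic modulo $\mathrm{lcm}(4,q)=2q$) and $m_2$ modulo $q$, apply two-dimensional Poisson summation, and evaluate the $u_2$-sum, which forces $u_1\equiv \overline{d}m_2 \pmod q$ with the two lifts $\overline{d}m_2$ and $\overline{d}m_2+q$ modulo $2q$. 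These two lifts are what produce the factor $\big(1-(-1)^{m_1}\big)$ in \eqref{r^*}, the half-integral frequency $\overline{d}/(2q)$, and the modulus $\sqrt{2}\,q$ in the Bessel argument.

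There are two concrete gaps in your treatment of this case. First, your general mechanism asserts that ``the simple pole of $\zeta(s)$ at $s=1$ gives the single main term,'' yet the stated identity for $q\equiv 2\pmod 4$ has no main term; a proof must show that it vanishes. This is the crux of the paper's computation: the $m_1=0$ frequency contributes $\chi_4(\overline{d}m_2)+\chi_4(\overline{d}m_2+q)$, which is zero precisely because $q\equiv 2\pmod 4$, and the $m_2=0$ frequency contributes $\chi_4$ at even arguments, which is zero. In your language this is the vanishing of the Gauss/Ramanujan-type sum multiplying the residue of $\zeta(s)$, and you never verify it; as written, your mechanism would output a spurious main term. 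Second, the functional equation you would need is for the additively twisted series with $\gcd(q,c)=2$, which is not among the standard cases $(c,q)=1$ and $c\mid q$ supplied by the references; deriving it amounts to redoing the residue-class splitting described above, so saying that the parity mismatch ``will require the most care'' defers precisely the content to be proved. A minor further symptom that the computations were not actually performed: the double pole of the Estermann function yields the main term $\frac{1}{q}\int_0^\infty(\log x+2\gamma-2\log q)\,g(x)\,dx$, i.e.\ $\log(x/q^2)+2\gamma$ as in the paper's own equation \eqref{6.2}, not the $-\log q$ you report; the lemma's statement has a typo here, which an honest residue computation should have caught rather than reproduced.
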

\noindent The formula for $d(n)$ is due to Jutila \cite{Jut-d(n)}. The formula for $a(n)$ and $\tau_{\chi}(n)$ in the case $(c, q)=1$ and $c|q$ can be found in Chapter 4 of \cite{Iw-kow}. Here we would give a proof for the case $c=4$ and $q \equiv 2 \text{ mod } 4$.
\begin{proof}[\bf{ Proof of Lemma \ref{Vor-Lemma}.}] Let $\chi_4$ be  a non principal odd character modulo $4$. We have
\begin{equation}
\label{r(n)-2}
\sum_{n}r(n) e\big(\frac{nd}{q}\big)g(n)= \sum_{n}\sum_{m_1m_2=n} \chi_{4}(m_1)e\big(\frac{m_1m_2d}{q}\big)g(m_1m_2).
\end{equation}
We set $m_1=2n_1q+u_1$ and $m_2=n_2q+u_2$. Since $q \equiv 2 \hspace{2 mm} (\text{ mod }4),$ with this choice of $m_1$ we have $\chi_{4}(m_1)=\chi_{4}(u_1)$ and therefore \eqref{r(n)-2} is equal to
\begin{equation}
\label{r(n)-2-1}
 \sum_{\substack{u_1 \text{ mod } 2q \\ u_2 \text{ mod } q}} \sum_{n_1, n_2} \chi_{4}(u_1)e\big(\frac{u_1u_2d}{q}\big)g\big((u_1+2qn_1)(u_2+qn_2)\big)
\end{equation}
We apply the Poisson summation formula (Equation (4.24) of \cite{Iw-kow}) to the sum over $n_1, n_2.$ Therefore, \eqref{r(n)-2-1} is equal to
\begin{equation}
\label{r(n)-2-2}
 \frac{1}{2q^2}\sum_{m_1, m_2} \sum_{\substack{u_1 \text{ mod } 2q \\ u_2 \text{ mod } q}}  \chi_{4}(u_1)e\big(\frac{u_1u_2d}{q}+ \frac{m_2u_2}{q}+ \frac{m_1u_1}{2q}\big)\hat{g}\big(\frac{m_1}{\sqrt 2 q}, \frac{m_2}{\sqrt 2 q}\big),
\end{equation}
where $$\hat{g}\big(\frac{m_1}{\sqrt 2 q}, \frac{m_2}{\sqrt 2 q}\big)=\int_{0}^{\infty} \int_{0}^{\infty} g(xy)e\big(\frac{-m_1x}{\sqrt 2 q}\big) e\big(\frac{-m_2y}{\sqrt 2 q}\big)dx dy.$$
Note that $\sqrt 2$ in the denominator comes from the change of variable inside the above integral. Now the sum over $u_2$ inside  \eqref{r(n)-2-2} is zero unless $u_1 \equiv \overline{d}m_2 \hspace{2 mm } (q),$ in which case the sum is equal to $q$. Since we considered $u_1$ modulo $2q$ we have only choices $ \overline{d}m_2,  \overline{d}m_2 +q$ for $u_1.$ Considering this \eqref{r(n)-2-2} is equal to
\begin{equation}
\label{r(n)-2-3}
 \frac{1}{2q}\sum_{m_1, m_2}  \bigg( \chi_{4}(\overline{d}m_2)e\big(\frac{m_1m_2\overline{d}}{2q}\big) + (-1)^{m_1}\chi_{4}(\overline{d}m_2+q)e\big(\frac{m_1m_2\overline{d}}{2q} \big) \bigg)\hat{g}\big(\frac{m_1}{\sqrt 2 q}, \frac{m_2}{\sqrt 2 q}\big)
\end{equation}
The rest of the proof follows exactly the proof of Theorem 4.14 in \cite{Iw-kow}. Note that in the case  $q \equiv 2 \hspace{2 mm} (4)$  we do not have a main term because the main term comes from setting $m_1$ or $m_2$ equal to zero. For $m_1=0$ we get $\chi_{4}(\overline{d}m_2)+\chi_{4}(\overline{d}m_2+q)$ inside the parenthesis in \eqref{r(n)-2-3}, which is equal to zero since  $q \equiv 2 \hspace{2 mm} (4)$. For $m_2=0$ we have both $\chi_{4}(\overline{d}m_2)$ and $\chi_{4}(\overline{d}m_2+q)$ are equal to zero.
\end{proof}
 \noindent Next we apply the Voronoi summation formula to \eqref{delt-thm-1} and corresponding formulas for \eqref{Thm-2-eq}, \eqref{Thm-3-eq} and \eqref{Thm-4-eq}.
\section{\textbf{Toward Kloosterman sums}}
 In this part we apply the Voronoi summation formula (Lemma \ref{Vor-Lemma}) to the sums we derived from the $\delta$-method. This will lead to the Kloosterman sums inside our formula for the error terms. Our final aim is to average the Kloosterman sums and obtain sharp estimates for the error terms. For $\tau_{\chi}(n)$ we will work out the formula in detail. For the shifted convolution of $r(n)$ in  \eqref{Thm-3-eq}, and the shifted convolution of $d(n)$ and $a(n)$ in \eqref{Thm-4-eq}, we will give the final formula. As for the divisor function, we will write the result for $d(n)$ using Equation (24) in \cite{DFI}. We consider the general case $a, b$ not necessarily equal to $1$ to explain why the method cannot be applied to the quadratic divisor problem.
\subsection{Formula for $\tau_{\chi} $} By using the $\delta$-method we have
\begin{align}
\label{delt-thm-2}
\sum_{n-m=h}\tau_{\chi}(n)& \overline{\tau_{\chi}(m)}f(n, m)= \\ & \notag \sum_{q<Q} \sum_{d \text{ mod }q}^{ \hspace{ 5mm} *} e\big(\frac{-hd}{q} \big) \sum_{n, m}\tau_{\chi}(n)\overline{\tau_{\chi}(m)} e\big(\frac{dn-dm}{q} \big)E(n, m, q).
\end{align}
Recall that $E(\cdot, \cdot, \cdot)$ is defined in \eqref{E}. First we split the sum over $q$ into two cases: $(p, q)=1$ and $p|q.$ For $(p, q)=1$ we apply \eqref{(p, q)=1} first to the sum over $n$ and we end up with two terms. Then we apply \eqref{(p, q)=1} to the sum over $m$ and we get two other terms. Consequently we have
\begin{align}
\label{delt-thm-2-ex}
& \sum_{\substack{q<Q \\ (p, q)=1}} \notag \sum_{d \text{ mod }q}^{ \hspace{ 5mm} *} e\big(\frac{-hd}{q} \big) \sum_{m, n}\tau_{\chi}(n)\overline{\tau_{\chi}(m)} e\big(\frac{dn-dm}{q} \big)E(m, n, q) \\ & =\sum_{\substack{q<Q\\ (p, q)=1}}  \sum_{d \text{ mod }q}^{ \hspace{ 5mm} *} e\big(\frac{-hd}{q} \big)\frac{|\chi({q})|^2}{q^2} L(1, \chi)^2 \int_{0}^{\infty} \int_{0}^{\infty} E(x, y, q)dx dy  \\ & \notag  + \sum_{\substack{q<Q\\ (p, q)=1}} \sum_{d \text{ mod }q}^{ \hspace{ 5mm} *}\boldsymbol{ e\big(\frac{-hd}{q} \big)} \Bigg ( 2 \pi \frac{|\chi({q})|^2}{q^2} \frac{\tau(\chi)}{p}L(1, \chi) \sum_{n=1}^{\infty} \tau_{\chi}(n)\boldsymbol{e\big(\frac{-n\overline{dp}}{q} \big)}  \int_{0}^{\infty} \int_{0}^{\infty} E(x, y, q) J_{0}\big(\frac{4\pi \sqrt{nx}}{q\sqrt p}\big)dx dy \\ & \notag - 2 \pi \frac{|\chi({q})|^2}{q^2} \frac{\tau(\chi)}{p} L(1, {\chi}) \sum_{m=1}^{\infty} \overline{\tau_{\chi}(m)}\boldsymbol{e\big(\frac{-m\overline{dp}}{q} \big)}  \int_{0}^{\infty} \int_{0}^{\infty} E(x, y, q) J_{0}\big(\frac{4\pi \sqrt{my}}{q\sqrt p}\big)dx dy   +  (2 \pi )^2\frac{|\chi({q})|^2}{q^2} \big(\frac{\tau(\chi)}{p}\big)^2 \\ & \notag \times \sum_{n, m=1}^{\infty} \tau_{\chi}(n)\overline{\tau_{\chi}(m)} \boldsymbol{e\big(\frac{-(n+m)\overline{dp}}{q} \big)}  \int_{0}^{\infty} \int_{0}^{\infty} E(x, y, q) J_{0}\big(\frac{4\pi \sqrt{nx}}{q\sqrt p}\big) J_{0}\big(\frac{4\pi \sqrt{my}}{q\sqrt p}\big)dx dy \Bigg ).
\end{align}
\noindent Now since we assumed that $p|h$ we write $h=h^\prime p$ and the terms in bold will form our Kloosterman sums. For $p|q$ we apply \eqref{p|q} in Lemma \ref{Vor-Lemma} once to the sum over $m$ and once to the sum over $n$. Therefore, when  $p|q$ \eqref{delt-thm-2} is equal to
\begin{align}
\label{delt-thm-2-exx}
&\sum_{\substack{q<Q\\ p|q}}  \sum_{d \text{ mod }q}^{ \hspace{ 5mm} *} e\big(\frac{-hd}{q} \big)\frac{|\chi(\overline{d})|^2}{q^2} \tau^{2}(\chi)L(1, \overline{\chi})^2 \int_{0}^{\infty} \int_{0}^{\infty} E(x, y, q)dx dy  \\ & \notag  + \sum_{\substack{q<Q\\ p|q}} \sum_{d \text{ mod }q}^{ \hspace{ 5mm} *}\boldsymbol{ e\big(\frac{-hd}{q} \big)} \Bigg ( 2 \pi i\frac{|\chi(\overline{d})|^2}{q^2} \tau(\chi)L(1, \overline{\chi}) \sum_{n=1}^{\infty} \tau_{\chi}(n)\boldsymbol{e\big(\frac{-n\overline{d}}{q} \big)}  \int_{0}^{\infty} \int_{0}^{\infty} E(x, y, q) J_{0}\big(\frac{4\pi \sqrt{nx}}{q}\big)dx dy \\ & \notag + 2 \pi i\frac{|\chi(\overline{d})|^2}{q^2} \tau(\chi)L(1, \overline{\chi}) \sum_{m=1}^{\infty} \overline{\tau_{\chi}(m)}\boldsymbol{e\big(\frac{-m\overline{d}}{q} \big)}  \int_{0}^{\infty} \int_{0}^{\infty} E(x, y, q) J_{0}\big(\frac{4\pi \sqrt{my}}{q}\big)dx dy \\ & \notag +  (2 \pi i)^2\frac{|\chi(\overline{d})|^2}{q^2}  \sum_{n, m=1}^{\infty} \tau_{\chi}(n)\overline{\tau_{\chi}(m)} \boldsymbol{e\big(\frac{-(n+m)\overline{d}}{q} \big)}  \int_{0}^{\infty} \int_{0}^{\infty} E(x, y, q) J_{0}\big(\frac{4\pi \sqrt{nx}}{q}\big) J_{0}\big(\frac{4\pi \sqrt{my}}{q}\big)dx dy \Bigg ).
\end{align}
Note that the terms in bold will form our Kloosterman sums.
\subsection{Formula for the sum of two squares} For $r(n)$ we have a formula similar to \eqref{delt-thm-2} with  $\tau_{\chi}(\cdot)$ replaced by $r(\cdot)$. Here we have to split the summation over $q$ to three cases: $4|q$, $(4, q)=1$ and $q \equiv 2 \hspace{2 mm} (4).$ For the first two cases the final formula would be the same as \eqref{delt-thm-2-ex} and \eqref{delt-thm-2-exx} with $p=4$ and $\tau_{\chi}(\cdot)=r(\cdot)$. The case we need to work out is  $q \equiv 2 \hspace{2 mm} (4).$ Let $r^{*}$ be as \eqref{r^*}. Then the corresponding formula to \eqref{delt-thm-2} for $r(n)$ is
\begin{align}
\label{delt-thm-2-exxx}
&\sum_{\substack{q<Q\\ q \equiv 2 \hspace{2 mm} (4)}}  \sum_{d \text{ mod }q}^{ \hspace{ 5mm} *} \boldsymbol{e\big(\frac{-hd}{q} \big)}\bigg(\frac{|\chi(\overline{d})|^2}{q^2} (2 \pi i)^2 \\ & \notag \sum_{n, m=1}^{\infty} r^{*}(n)r^{*}(m) \boldsymbol{e\big(\frac{-(n+m)\overline{d}}{2q} \big)}  \int_{0}^{\infty} \int_{0}^{\infty} E(x, y, q) J_{0}\big(\frac{4\pi \sqrt{nx}}{\sqrt 2  q}\big) J_{0}\big(\frac{4\pi \sqrt{my}}{\sqrt 2 q}\big)dx dy\bigg).
\end{align}
Note that since $r^{*}(n)=0$ for even $n$, we can write the sum over $m, n$ in \eqref{delt-thm-2-exx} in terms of odd $m, n$. Therefore $(n+m)/2$ is an integer and with this we will have our Kloosterman sums.
\subsection{Formula for the divisor function}
For the shifted convolution sum of $d(n)$, i.e. equation \eqref{0.2}, using  \cite[Equation (24)]{DFI} we have
\begin{align}
\label{1.2}
D_{f}(a, &b; h)=\sum_{q<Q} \frac{(ab,q)}{q^2} \Bigg(  S (h, 0; q)I +  \notag \sum_{n=1}^{\infty}d(n)S(h, (a, q)\overline{a_q}n; q  )I_{a}(n, q)  \\  &  + \sum_{m=1}^{\infty}d(m)S(h, -(b, q)\overline{b_q}m; q  )I_{b}(m, q)  \\ \notag & +
\sum_{m,n=1}^{\infty}d(n)d(m)S(h, (a, q)\overline{a_q}n-(b, q)\overline{b_q}m; q  )I_{ab}(n, m, q)+ \divideontimes \divideontimes \divideontimes \divideontimes \divideontimes \Bigg),
\end{align}
where $a_q=a/(a, q)$ and $\overline{a_q}\text{ is the inverse of } a_q \text{ modulo } q$ and
\begin{align}
\label{1.3}
I_{ab}(&n, m, q)=  4\pi^{2}\int_{0}^{\infty} \int_{0}^{\infty} Y_0\big(\frac{4\pi(a, q)\sqrt{mx}}{q}\big)Y_0\big(\frac{4\pi(b, q)\sqrt{ny}}{q}\big)E(x, y, q)dxdy,
\end{align}
and
\begin{align}
\label{1.4}
I_{b}(&n, q)=  -2\pi\int_{0}^{\infty} \int_{0}^{\infty} (\log(ax)- \lambda_{a, q})Y_0\big(\frac{4\pi(b, q)\sqrt{ny}}{q}\big)E(x, y, q)dxdy,
\end{align}
and $ \lambda_{a, q}=2\gamma + \log\frac{aq^2}{(a, q)^2}.$ The $\divideontimes \divideontimes \divideontimes \divideontimes \divideontimes$ means we have $5$ more terms involving a $K_0$-Bessel function. The main term for $D_{f}(a, b; h)$ comes from the contribution of the sum over $q$  of $\mathcal{S} (h, 0; q)I$ in \eqref{1.2}. As for the error term have two types of error terms :
$$  \mathcal{E}_1(a, b)= \sum_{q<Q} \frac{(ab,q)}{q^2} \sum_{n=1}^{\infty}d(n){S}(h, (a, q)\overline{a_q}n; q  )I_{a}(n, q),$$
and
$$\mathcal{E}_2(a, b)= \sum_{q<Q} \frac{(ab,q)}{q^2} \sum_{m,n=1}^{\infty}d(n)d(m){S}(h, (a, q)\overline{a_q}n-(b, q)\overline{b_q}m; q  )I_{ab}(n, m, q),$$
and our aim is to show that $\mathcal{E}_1(1, 1), \mathcal{E}_2(1, 1) \ll X^{1/2+ \epsilon}h^{\theta}.$
\begin{remark}
In \cite{DFI} Equation (24) there is a typo.Instead of $(a, q)\overline{a_q}$ they have $\overline{a}$, inside the Kloosterman sums. We would also like to emphasize that we cannot prove our result for $D_f(a, b; h)$ for $a, b\neq 1$ because the term $\overline{a_q}$ enters into the Kloosterman sums. Ultimately, this will make the averaging impossible. The advantage of Jutila circle method \cite{J} is  that the sum over $q$ can be restricted to the multiples of $ab$, while in the $\delta$-method the sum over $q$ runs over all integers less than $Q$. However, it seems difficult to apply Jutila's circle method to the divisor function as it does not have square root cancellation while the sum of the coefficients coming from holomorphic or cusp forms has square root cancellation, see \cite{Bl}.
\end{remark}

\noindent Here we just need to deal with the error term arising from $I_{ab}(n, m, q)$. The error terms arising from $I_a(m, q)$ and $I_b(n, q)$ can be handled  similar to $I_{ab}(n, m, q)$. Throughout the proof we will make comments on the similarity between $I_{ab}(n, m, q),$  $I_{a}(n, q)$  and $I_{b}(m, q)$. The other errors in  \eqref{1.2}, corresponding to ($\divideontimes \divideontimes \divideontimes \divideontimes \divideontimes$), that arise from the $K_0$-Bessel function, can be handled by the similarity between the $K_0$ and  $Y_0$-Bessel functions. We set $a,b=1$ and write $I(n, m, q)$ in a place of $I_{ab}(n, m, q)$. \\

\section{\textbf{Bounding $I(n, m, q)$}}
In this section we will do the necessary adjustments in order to be able to use results regarding averaging Kloosterman sums. The main difficulty in proving the fact that $I(m, n)$ oscillates mildly in respect to $q$, comes from small $q$. In \cite{DFI} the parameter $Q$, in the $\delta$-method, is equal to  $2\sqrt{X}$. If we use the same choice of $Q$ and follow the  method in \cite[Equation (30)]{DFI} for $q\ll 1$ we get the bound $I(n, m, q) \ll \sqrt{X}$, while we need $I(n, m, q) \ll X^{\varepsilon}$. We overcome this difficulty by changing the parameter $Q$ from $\sqrt X$ to  $X^{1/2+ \varepsilon}.$ As a result we have to consider a wider range for the sum over $q$ in \eqref{1.2}. However, the faster rate of decay of the partial derivatives of $w$ in the $\delta$-method will help us to show that $I(n, m, q)$ is very small for $\displaystyle{q<X^{\frac{1}{2}-\varepsilon}}$.
Let $I(n, m, q)$ be as \eqref{1.3} with $a=  b=1,$ $$I(n, m, q)=  4\pi^{2}\int_{0}^{\infty} \int_{0}^{\infty} Y_0\big(\frac{4\pi\sqrt{mx}}{q}\big)Y_0\big(\frac{4\pi\sqrt{ny}}{q}\big)E(x, y, q)dxdy.$$
We will prove the following lemmas to show that the contribution of small $q$'s in \eqref{1.2} are negligible. We will also find upper bounds for the range of the sum over $n$ in \eqref{1.2}. We will consider $I(n, m, q)$ with a $Y_0-$Bessel function, but the same lemmas are valid with replacing $Y_0$ with $J_0-$Bessel function. This is true because the properties of the $Y_0-$Bessel function that we will use in the proof are that $\displaystyle{(z^v Y_{v}(z))^{\prime}=z^v Y_{v-1}(z)}$ and $Y_{v}(z) \ll 1/ \sqrt{z}.$ We also have the same properties for the $J_0-$Bessel functions: $\displaystyle{(z^v J_{v}(z))^{\prime}=z^v J_{v-1}(z)}$ and $J_{v}(z) \ll 1/ \sqrt{z}.$
\begin{lemma}
\label{lemma-q}
For $\varepsilon>0$ there exist $i, j \in \mathbb{N}$ such that, for  $q< X^{1/2 - \varepsilon}$ we have $$I(n, m, q) \ll {m^{-i/2-1/4}n^{-j/2-1/4}X^{-1}}$$ and $$I(n, q) \ll {n^{-j/2}X^{-1}}.$$
\end{lemma}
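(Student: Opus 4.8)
The plan is to exploit the oscillation of the $Y_0$-Bessel factors by integrating by parts repeatedly, using only the two properties recorded before the lemma: $(z^v Y_v(z))' = z^v Y_{v-1}(z)$ and $Y_v(z)\ll z^{-1/2}$. Writing $c_m = 4\pi\sqrt m/q$, after $i$ integrations by parts in $x$ against $Y_0(c_m\sqrt x)$ one obtains the identity
\[
\int_0^\infty Y_0(c_m\sqrt x)\,g(x)\,dx = \pm\int_0^\infty Q_i(x)\,g^{(i)}(x)\,dx,\qquad Q_i(x)=\frac{2^i}{c_m^{\,i}}\,x^{i/2}Y_i(c_m\sqrt x),
\]
the boundary terms vanishing because $E$, and hence $g$, is supported in $x\asymp X$ and away from $0$. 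On the support one has $c_m\sqrt x\asymp \sqrt{mX}/q\ge X^{\varepsilon}\gg 1$ (using $q<X^{1/2-\varepsilon}$), so the estimate $Y_i\ll (c_m\sqrt x)^{-1/2}$ is legitimate and gives $\sup_x|Q_i(x)|\ll q^{\,i+1/2}m^{-i/2-1/4}X^{i/2-1/4}$. Performing the analogous operation in $y$ with $c_n=4\pi\sqrt n/q$ produces a kernel $R_j$ of the same shape, so that
\[
|I(n,m,q)|\ll \sup|Q_i|\,\sup|R_j|\int_0^\infty\!\!\int_0^\infty\big|\partial_x^{\,i}\partial_y^{\,j}E(x,y,q)\big|\,dx\,dy .
\]
First I would record this double integration by parts; the powers $m^{-i/2-1/4}$ and $n^{-j/2-1/4}$ in the statement fall out immediately from $\sup|Q_i|$ and $\sup|R_j|$, and everything then reduces to bounding the integral of the derivatives of $E=f(x,y)\,\phi(x-y-h)\,\Delta_q(x-y-h)$.

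The hard part is the derivative of $\Delta_q$, which is precisely the small-$q$ difficulty flagged before the lemma. Since $f$ and $\phi$ contribute $\ll X^{-1}$ per derivative, the dominant Leibniz term is the one in which all $i+j$ derivatives land on $\Delta_q$. Writing $u=x-y-h$ and differentiating $\Delta_q(u)=\sum_r\frac{w(qr)-w(u/(qr))}{qr}$, the constant part drops out, and for $a\ge 1$
\[
\frac{\partial^a}{\partial u^a}\Delta_q(u)=-\sum_r\frac{w^{(a)}(u/(qr))}{(qr)^{a+1}} .
\]
Because $w^{(a)}\ll Q^{-(a+1)}$ is supported on $[Q,2Q]$, a nonzero term forces $qr\asymp u/Q$, hence $|u|\ge qQ$; summing the $\asymp |u|/(qQ)$ surviving terms yields $\partial_u^{\,a}\Delta_q(u)\ll (qQ)^{-1}|u|^{-a}$, supported on $|u|\ge qQ$. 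Thus the derivative of $E$ is concentrated in the thin strip $|u|\asymp qQ$, of area $\asymp X\cdot qQ$ inside $[X,2X]^2$, and integrating $\partial_u^{\,i+j}\Delta_q\ll (qQ)^{-(i+j+1)}$ over it gives
\[
\int_0^\infty\!\!\int_0^\infty\big|\partial_x^{\,i}\partial_y^{\,j}E\big|\,dx\,dy\ll X\,(qQ)^{-(i+j)} .
\]
I expect this to be the main obstacle: one must check that the large pointwise size of $\partial_u^{\,i+j}\Delta_q$ near $u\asymp qQ$ is exactly compensated by the thinness of the strip, and the choice $Q=X^{1/2+\varepsilon}$ (rather than $\sqrt X$) is what makes the resulting saving beat $X^{1/2}$.

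Combining the three estimates and inserting $Q=X^{1/2+\varepsilon}$ together with $q<X^{1/2-\varepsilon}$, all powers of $q$ and $Q$ collapse and I obtain
\[
|I(n,m,q)|\ll m^{-i/2-1/4}\,n^{-j/2-1/4}\,X^{\,1-\varepsilon(i+j+1)} .
\]
Choosing $i,j$ with $i+j\ge 2/\varepsilon-1$ drives the exponent of $X$ below $-1$ and yields the first claim. For the second bound $I(n,q)\ll n^{-j/2}X^{-1}$ the integrand carries the single Bessel factor and the non-oscillating factor $\log x-\lambda_{1,q}$, so I would integrate by parts only $j$ times in $y$, bound $|\log x-\lambda_{1,q}|\ll X^{\varepsilon}$, and repeat the $\Delta_q$ analysis with derivatives in $y$ alone; the same bookkeeping (with no $x$-integration by parts) gives $\ll n^{-j/2-1/4}X^{\,1-\varepsilon j+\varepsilon}$, which is $\ll n^{-j/2}X^{-1}$ once $j$ is large. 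Since the only properties of $Y_0$ used are the recurrence and $Y_v\ll z^{-1/2}$, the identical argument applies verbatim with $J_0$ in place of $Y_0$.
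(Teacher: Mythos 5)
Your proposal is correct and follows essentially the same route as the paper: repeated integration by parts against the Bessel factors via the recurrence $(z^vY_v(z))'=z^vY_{v-1}(z)$, the observation that the dominant Leibniz term is the one with all derivatives falling on $\Delta_q$ (so that $E^{(i,j,0)}\ll(qQ)^{-i-j-1}$), and the choice $Q=X^{1/2+\varepsilon}$ with $i,j\asymp 1/\varepsilon$ to force the exponent of $X$ below $-1$. The only differences are cosmetic: the paper first changes variables $u=4\pi\sqrt{mx}/q$, $v=4\pi\sqrt{ny}/q$ and integrates the pointwise bound over the dyadic box, while you integrate by parts directly in $x,y$ and exploit the localization of $\partial_u^a\Delta_q$ to the strip $|u|\gtrsim qQ$, a slightly sharper but equivalent bookkeeping.
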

\begin{proof}
We begin with a change of variable in $I(n, m, q).$ Setting $\displaystyle{u=\frac{4\pi\sqrt{mx}}{q}}$ and $v=\displaystyle{\frac{4\pi\sqrt{ny}}{q}}$ in the expression for $I(n, m, q)$ yields
\begin{align}
\label{2.1}
I(n, m, q)=\frac{4\pi^{2}q^4}{(4\pi)^4 mn}\int_{0}^{\infty} \int_{0}^{\infty} uY_0(u)vY_0(v)E\bigg(\frac{u^2q^2}{(4\pi)^{2}m}, \frac{v^2q^2}{(4\pi)^{2}n}, q\bigg)dudv.
\end{align}
By employing the recursive formula $\displaystyle{(z^v Y_{v}(z))^{\prime}=z^v Y_{v-1}(z)}$ and integration by parts in \eqref{2.1}  we have
\begin{align}
\label{2.2}
I(n,& m, q) \asymp\frac{q^{2(i+j+2)}}{m^{i+1}n^{j+1}}  \int_{0}^{\infty} \int_{0}^{\infty} u^{i+1}Y_i(u)v^{j+1}Y_j(v)E^{(i, j, 0)}\bigg(\frac{u^2q^2}{(4\pi)^{2}m}, \frac{v^2q^2}{(4\pi)^{2}n}, q\bigg)dudv.
\end{align}
Similarly, by integrating by parts in \eqref{1.4}, for $I(n, q)$ we deduce that
\begin{align}
\label{2.2}
I(n, q)\asymp\frac{q^{2(j+1)}}{n^{j+1}}  \times\int_{0}^{\infty} \int_{0}^{\infty}\notag (\log x- \lambda_q)v^{j+1}Y_j(v)E^{(0, j, 0)}\bigg(x, \frac{v^2q^2}{(4\pi)^{2}n}, q\bigg)dxdv.
\end{align}
Here we need to estimate the partial derivatives of  $E(x,y, q).$ Recall that $E(x,y, q)=f(x, y)\phi(x-y-h)\Delta_q(x-y-h),$ and for the partial derivatives of $E$ we have
\begin{equation}
\label{2.3}
E^{(i, j, 0)}:= \frac{\partial^{i+j}}{\partial x^{i} \partial y^{j}} E(x, y, q)= 
\sum_{\substack{r, r^{\prime}, s, s^{\prime} \geq 0\\ r+r^{\prime}=i \\ s+ s^{\prime}=j}} c_{r,r^{\prime}, s, s^{\prime}}(f\phi)^{(r,s)}\Delta^{(r^{\prime}, s^{\prime})}_q.
\end{equation}
 For the partial derivative of $f$ and $\Delta_q,$ we have $(f\phi)^{(r,s)} \ll \frac{1}{X^{(r+s)}}$ and $\Delta^{(r^{\prime}, s^{\prime})}_q \ll \frac{1}{(qQ)^{(r^{\prime}+ s^{\prime}+1)}}.$ Now since $\displaystyle{q<X^{1/2-\varepsilon}}$ we have $qQ<X,$ so the major term in \eqref{2.3} is $\Delta^{(i, j)}_q$. Therefore we have $E^{(i, j, 0)} \ll {(qQ)^{-i-j-1}}.$ We will apply the latter bound for $E^{(i, j, 0)}$ together with the bound  $Y_{i}(u) \ll \frac{1}{\sqrt{u}}$ in \eqref{2.2} to get
\begin{align}
 I(n, m, q) & \ll  \frac{q^{2(i+j+2)}}{m^{i+1}n^{j+1}(qQ)^{i+j+1}} \int_{\  \frac{\sqrt{mX}}{q}}^{_{\frac{\sqrt{2mX}}{q}}} \int_{\ \frac{ \sqrt{nX}}{q}}^{_{\frac{\sqrt{2nX}}{q}}} u^{i+\frac{1}{2}}v^{j+\frac{1}{2}}du dv  \\ & \notag \ll_{i,j} \frac{q^{2(i+j+2)}X^{(i+j+3)/2}m^{i/2+3/4}n^{j/2+3/4}}{m^{i+1}n^{j+1}(qQ)^{i+j+1}q^{i+j+3}}  \ll_{i,j} \frac{X^{(i+j+3)/2}}{m^{i/2+1/4}n^{j/2+1/4}Q^{i+j+1}}.
 \end{align}
A similar argument for $I(n, q)$ yields: $$I(n, q) \ll  \frac{X^{(j+4)/2}}{n^{j/2}Q^{j+1}}. $$ Now, using $Q=X^{1/2 +\varepsilon}$ and $\displaystyle{j=\big \lfloor\frac{3}{\varepsilon}}\big \rfloor$ completes the proof.
\end{proof}
\noindent  The following lemma will provide the bound for the sum over $m, n$ in Equation (24) in \cite{DFI}
\begin{lemma}
\label{lemma-r}
For $ X^{1/2 - \varepsilon}< q < X^{1/2 + \varepsilon}$, the contribution of $m,n> X^{3\varepsilon}$ in \eqref{1.2} is negligible.\\
\end{lemma}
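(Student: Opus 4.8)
\noindent The plan is to recycle the integration-by-parts identity \eqref{2.2} for $I(n,m,q)$, which holds for every $i,j\in\mathbb{N}$, and to exploit the freedom to take $i,j$ arbitrarily large so as to manufacture any prescribed polynomial decay in $m$ and $n$. The single step of Lemma~\ref{lemma-q} that must be redone is the estimate for the partial derivatives $E^{(i,j,0)}$ in \eqref{2.3}. In the regime $q<X^{1/2-\varepsilon}$ treated there one had $qQ<X$, so the heaviest contribution to \eqref{2.3} came from differentiating $\Delta_q$. In the present range $X^{1/2-\varepsilon}<q<X^{1/2+\varepsilon}$, with $Q=X^{1/2+\varepsilon}$, one instead has $qQ>X$; consequently each factor $(qQ)^{-1}$ is smaller than $X^{-1}$, and in the Leibniz expansion \eqref{2.3} it is now advantageous to place all derivatives on $f\phi$. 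This reverses the previous conclusion and gives $E^{(i,j,0)}\ll_{i,j} X^{-(i+j)}(qQ)^{-1}$.

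\noindent Feeding this into \eqref{2.2}, together with $Y_i(u),Y_j(v)\ll u^{-1/2},v^{-1/2}$ and the fact that the support of $E$ restricts the integration to $u\asymp\sqrt{mX}/q$ and $v\asymp\sqrt{nX}/q$, I would evaluate the two one-dimensional integrals exactly as in Lemma~\ref{lemma-q}. Keeping track of the powers of $q,m,n,X,Q$ yields
\begin{equation}
I(n,m,q)\ll_{i,j}\frac{q^{i+j}}{m^{i/2+1/4}\,n^{j/2+1/4}}\cdot\frac{X^{(3-i-j)/2}}{Q}.
\end{equation}
The essential feature is the genuine decay $m^{-i/2-1/4}n^{-j/2-1/4}$, which on the range $m,n>X^{3\varepsilon}$ contributes a saving $X^{-3\varepsilon(i+j)/2}$ that can be made to dominate every competing factor by enlarging $i+j$.

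\noindent It remains to sum the corresponding term of \eqref{1.2} over the restricted ranges. Bounding the Kloosterman sum by Weil's estimate, $S(h,n-m;q)\ll q^{1/2+\epsilon}$, and the divisor weights trivially, the contribution factors into a sum over $q$ and tails over $m,n$. With the exponents above, the $q$-sum $\sum_{q<X^{1/2+\varepsilon}}q^{i+j-3/2+\epsilon}$ is governed by the top of the range and contributes $X^{(1/2+\varepsilon)(i+j-1/2)+O(\epsilon)}$, while the tails $\sum_{m>X^{3\varepsilon}}d(m)m^{-i/2-1/4}$ and $\sum_{n>X^{3\varepsilon}}d(n)n^{-j/2-1/4}$ converge for $i,j$ large and each save $X^{-3\varepsilon(i+j)/2+O(\varepsilon)}$; after multiplying in the remaining $X^{1-(i+j)/2-\varepsilon}$, the net exponent works out to $3/4-\varepsilon(i+j)/2+O(\varepsilon)$, which tends to $-\infty$ as $i+j\to\infty$. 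Thus the contribution is $O(X^{-A})$ for every $A$, and the companion error from $I(n,q)$ (with decay only in $n$) and the five $K_0$-terms are disposed of identically, the latter being even smaller by the exponential decay in \eqref{Bess-deriv-est-k}. The one genuinely new point — and the thing that must be checked with care — is precisely this competition: the cutoff $X^{3\varepsilon}$ has to be large enough (here the constant $3$ weighed against the half-width $\varepsilon$ of the $q$-range) that the decay in $m,n$ beats the growth incurred by extending the $q$-sum up to $X^{1/2+\varepsilon}$.
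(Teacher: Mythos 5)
Your proof is correct and is essentially the paper's own argument: in this $q$-range one has $qQ\ge X$, so the Leibniz expansion \eqref{2.3} is dominated by the terms with all derivatives on $f\phi$, giving $E^{(i,j,0)}\ll X^{-(i+j)}(qQ)^{-1}\ll X^{-i-j-1}$, and feeding this into \eqref{2.2} produces exactly the decay $m^{-i/2-1/4}n^{-j/2-1/4}$ which, upon taking $i,j\asymp 1/\varepsilon$, overwhelms the growth of the $q$-sum after a pointwise bound on the Kloosterman sums. The only cosmetic differences are that the paper uses the trivial bound $|S(h,n-m;q)|\le q$ rather than Weil's, and it replaces $(qQ)^{-1}$ by $X^{-1}$ at the outset (the paper's displayed inequality $\tfrac{1}{qQ}>\tfrac{1}{X}$ is a typo for $\tfrac{1}{qQ}<\tfrac{1}{X}$, exactly as your reasoning indicates).
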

\begin{proof}
Since $ X^{1/2 - \varepsilon}< q < X^{1/2 + \varepsilon}$, we have $\frac{1}{qQ} > \frac{1}{X}.$ Therefore $E^{(i, j, 0)} \ll {X^{-i-j-1}}. $  We are using same bounds as Lemma \ref{lemma-q} in \eqref{2.2} and consequently we have
\begin{align*}
 I(n, m, q)\notag &  \notag \ll_{i,j} \frac{q^{2(i+j+2)}X^{(i+j+3)/2}m^{i/2+3/4}n^{j/2+3/4}}{m^{i+1}n^{j+1}X^{i+j+1}q^{i+j+3}}  \\ & \ll_{i,j} \frac{q^{i+j+1}X^{(i+j+3)/2}}{m^{i/2+1/4}n^{j/2+1/4}X^{i+j+1}}.
 \end{align*}
 Now using $q<X^{1/2 + \varepsilon}$ we get $$I(n, m, q)  \ll_{i,j} \frac{X^{\varepsilon(i+j+1)+1}}{m^{i/2+1/4}n^{j/2+1/4}}. $$Similarly  for $I(n, q)$ we have :
 $$I(n, q)  \ll_{j} \frac{X^{\varepsilon(j+1)+1}}{n^{j/2}}.$$ And therefore by taking $\displaystyle{j=\big \lfloor\frac{3}{\varepsilon}}\big \rfloor$ we have $$\sum_{m,n > X^{3\varepsilon}}d(n)d(m)I(n, m, q) \ll_{i, j} \frac{X}{X^{{\varepsilon(i+j-3)/2}}} \ll \frac{1}{\sqrt{X}}.$$ The same bound for the sum over $I(n, q)$ holds. Using this in \eqref{1.2} combined with the trivial bound on the Kloosterman sums gives us the error term of order $O(X^{-1/2+ \epsilon}).$ This shows the the contribution of $m,n> X^{3\varepsilon}$ is negligible and we only need to consider the sum over $m, n$ in \eqref{1.2} up to $X^{3\varepsilon}.$ This finishes the proof of the Lemma.
\end{proof}
Basically  Lemma \ref{lemma-q} and \ref{lemma-r} show that $$\mathcal{E}_2(1, 1)= \sum_{q<X^{1/2 - \epsilon}< q< X^{1/2 + \epsilon}}  \frac{1}{q^2} \sum_{ m,n < X^{3\epsilon} }^{\infty}d(n)d(m)S(h, n-m; q)I(n, m, q)+ O(\frac{1}{\sqrt X}), $$
and a similar argument for  $\mathcal{E}_1(1, 1)$: $$\mathcal{E}_1(1, 1) +\sum_{q<X^{1/2 - \epsilon}< q< X^{1/2 + \epsilon}}  \frac{1}{q^2} \sum_{ m,n < X^{3\epsilon} }^{\infty} d(n){S}(h, n; q  )I(n, q) + O(\frac{1}{\sqrt X}).$$
\section{\textbf{Averaging the Kloosterman Sums}}
In this part we state the lemmas that we will need in  averaging the Kloosterman sums. These results were derived by an application of the Kuznetsov formula. The first lemma  is due to  Deshouillers and Iwaniec \cite{J}.  This will be used when we average the Kloosterman sums over all moduli. \\
\begin{lemma}
\label{Lemma-J}
Let $m \geq 1,$ $P>0,$ $Q>0,$ and let $g(x, y)$ be a function of class $C^4$ with support on
$[P, 2P]\times [Q, 2Q]$ satisfying
$$\frac{\partial^{i+j} }{\partial q^{i} \partial r^{j}}g(x, y) \ll \frac{1}{P^i Q^j} \text{  for } 0 \leq i, j \leq 2.$$
Then for any complex numbers $a_p$ we have
\begin{align}
 \sum_{P< p <2P} \sum_{Q< q < 2Q} a_p g(p, q)  S(h, \pm p, q)  \ll (\sqrt{h}+Q) P^{1/2}
h^{\theta}\bigg(\sum_{p} |a_{p}|^2\bigg)^{1/2}(hPQ)^{\epsilon}.
\end{align}
\end{lemma}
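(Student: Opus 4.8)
The plan is to view the double sum as a bilinear form in the Kloosterman sums $S(h,\pm p;q)$, with the fixed entry $h$, the ``frequency'' $\pm p$ weighted by $a_p$, and the modulus $q$, and to extract cancellation by means of the Kuznetsov trace formula (as in \cite{Des-Iwa-Kuz}, Theorem 1) together with the spectral large sieve inequalities of Deshouillers and Iwaniec \cite{Des-Iwa}. Write
\[
\Sigma=\sum_{P<p<2P} a_p\sum_{Q<q<2Q} g(p,q)\,S(h,\pm p;q).
\]
Since the Kuznetsov formula naturally produces sums of the shape $\sum_c c^{-1}S(m,n;c)\,\psi(4\pi\sqrt{mn}/c)$, I would first absorb a factor $q\asymp Q$ into the weight, writing $g(p,q)=q^{-1}\psi_p(4\pi\sqrt{hp}/q)$ for a test function $\psi_p$ of size $\asymp Q$ supported where its argument is $\asymp \sqrt{hP}/Q$. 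The derivative hypotheses on $g$ transfer to $\psi_p$ and guarantee that $\psi_p$ is admissible in the trace formula and that its Bessel transform is well controlled; the two signs $\pm p$ are treated by the same-sign and opposite-sign Kuznetsov formulas respectively.

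Applying the trace formula to the inner sum over $q$ converts it into a spectral expansion: a discrete part over Maass cusp forms with spectral parameters $t_j$ and Fourier coefficients $\rho_j(\cdot)$, a holomorphic part, and an integral over Eisenstein series. In each term the dependence on $h$ and on $p$ factors as $\rho_j(h)\,\overline{\rho_j(p)}$ times a spectral multiplier $Q\,\widehat{\psi}(t_j)$ that is essentially supported on $t_j\ll T$ with $T\asymp 1+\sqrt{hP}/Q$. Summing over $p$ against $a_p$ and applying Cauchy--Schwarz in the spectral variable detaches the $h$-factor from the $p$-factor, leaving a product of $\big(\sum_{t_j\le T}\omega_j|\rho_j(h)|^2\big)^{1/2}$ and $\big(\sum_{t_j\le T}\omega_j^{-1}\big|\sum_p a_p\rho_j(p)\big|^2\big)^{1/2}$ for suitable weights $\omega_j$. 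The second factor is bounded by the Deshouillers--Iwaniec large sieve, of the shape
\[
\sum_{t_j\le T}\frac{1}{\cosh\pi t_j}\Big|\sum_{P<p<2P} a_p\rho_j(p)\Big|^2\ll (T^2+P)(TP)^\epsilon\sum_p|a_p|^2,
\]
with the analogous inequalities for the holomorphic and continuous spectra. Inserting $T\asymp 1+\sqrt{hP}/Q$ gives $T^2+P\ll P(1+h/Q^2)$, and combining this with the size of the multiplier $Q\,\widehat\psi(t_j)$ and the corresponding bound for the $h$-factor recovers the main term $(\sqrt h+Q)P^{1/2}(\sum_p|a_p|^2)^{1/2}(hPQ)^\epsilon$.

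The subtle point, and the origin of the factor $h^\theta$, is the exceptional spectrum. Maass forms with spectral parameter $t_j=i\kappa_j$, $0<\kappa_j<1/2$, violate Selberg's eigenvalue conjecture, and for such a form the normalized coefficient $\rho_j(h)$ may be as large as $h^{\kappa_j}$ times the generic size; bounding $\kappa_j\le\theta$ by the best available approximation to the Ramanujan--Petersson/Selberg conjecture (currently $\theta\le 7/64$ by Kim and Sarnak \cite{Kim-Sar}) is precisely what converts this potential growth into the uniform factor $h^\theta$, which disappears when $\theta=0$. Thus the hard part is twofold: controlling the Bessel transform $\widehat\psi(t_j)$ uniformly across the oscillatory range $\sqrt{hP}/Q\gg1$ and the non-oscillatory range so that the multiplier has the claimed size and support, and tracking the exceptional-eigenvalue contribution through both the multiplier and the large sieve to produce exactly $h^\theta$. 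The holomorphic and Eisenstein contributions carry no exceptional eigenvalues and are dominated directly by the corresponding large sieve bounds. Since this is exactly the large sieve estimate for Kloosterman sums established by Deshouillers and Iwaniec, one may alternatively invoke their result directly with the test function chosen as above.
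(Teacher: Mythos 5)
The paper does not actually prove this lemma: it is stated as a known result derived ``by an application of the Kuznetsov formula'' and attributed to Deshouillers and Iwaniec (the citation \cite{J} given there is evidently a slip for \cite{Des-Iwa-Kuz}), so the paper's entire proof is a citation. Your sketch is a correct reconstruction of the standard Deshouillers--Iwaniec argument behind that citation --- converting $g$ into an admissible test function, applying Kuznetsov, Cauchy--Schwarz in the spectral variable, the spectral large sieve with $T\asymp 1+\sqrt{hP}/Q$, and the exceptional spectrum producing the factor $h^{\theta}$ --- and your closing alternative of simply invoking their theorem with this test function is exactly what the paper does.
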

\noindent The second lemma \cite[Proposition 3.5.]{Bl} is useful when the averaging is over multiples of an integer.
\begin{lemma}
\label{Lemma-Blo}
With notations of Lemma \ref{Lemma-J} and for $N>0$ we have
\begin{align}
 \sum_{P< p <2P} \sum_{\substack{Q< q < 2Q \\ N|q}} a_p g(p, q) & S(h, \pm p, q)   \\ & \notag \ll Q \bigg(\sum_{p} |a_{p}|^2\bigg)^{1/2}
\big(1+ \frac{hP}{Q^2} +\frac{P}{N}\big)^{1/2} h^{\theta} \Big(1+ \big(\frac{Q^2}{hP}\big)^{\theta}\Big)(hPQ)^{\epsilon}.
\end{align}
\end{lemma}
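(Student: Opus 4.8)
The plan is to derive the estimate from the Kuznetsov trace formula for the Hecke congruence group $\Gamma_0(N)$, whose geometric side is precisely a weighted sum of Kloosterman sums $S(h,\pm p;q)$ over moduli $q$ divisible by $N$ — exactly the object on the left-hand side. First I would fix $p$ and regard $\sum_{N\mid q} g(p,q)\,S(h,\pm p;q)$ as the geometric side of the sum formula, after restoring the normalisation: Kuznetsov produces $S(h,\pm p;q)/q$ whereas our sum carries no $1/q$, and since $q\asymp Q$ on the support of $g$ this discrepancy accounts for the leading factor $Q$. The sign $+$ is treated by the ordinary (opposite-parity, $J$-Bessel) sum formula and the sign $-$ by the $K$-Bessel formula. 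Writing the argument of the resulting Bessel transform as $x=4\pi\sqrt{hp}/q\asymp\sqrt{hP}/Q$, I would record that the transform $\widetilde g(t)$ of $g$ in the $q$-variable is essentially supported on $|t|\le T$ with $T\asymp 1+\sqrt{hP}/Q$; the two regimes $x\ll 1$ and $x\gg 1$ produce the summands $1$ and $hP/Q^2$ inside the parenthesis, respectively.

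Next I would sum over $p$ against $a_p$. On the spectral side the Fourier coefficients $\rho_j(p)$ of the Maass cusp forms of $\Gamma_0(N)$ (together with the Eisenstein and, for the appropriate sign, holomorphic contributions) appear weighted by $\rho_j(h)\,\widetilde g(t_j)$. Applying Cauchy--Schwarz in $p$ and invoking the Deshouillers--Iwaniec spectral large sieve at level $N$,
\begin{equation*}
\sum_{|t_j|\le T}\frac{1}{\cosh \pi t_j}\Bigl|\sum_{P<p<2P} a_p\,\rho_j(p)\Bigr|^2 + (\text{continuous part}) \ll \Bigl(T^2 + \frac{P}{N}\Bigr)(TP)^{\epsilon}\sum_p|a_p|^2,
\end{equation*}
converts the $p$-sum into $\bigl(T^2+P/N\bigr)^{1/2}\|a\|_2=\bigl(1+hP/Q^2+P/N\bigr)^{1/2}\|a\|_2$, which is exactly the middle factor. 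The level-$N$ saving $P/N$ — reflecting that the cusp $\infty$ of $\Gamma_0(N)$ sees coefficients only at the scale dictated by the width $N$ — is the origin of the $P/N$ term and must be tracked uniformly in $N$. Specialising $N=1$ recovers the all-moduli estimate of Lemma \ref{Lemma-J}.

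It remains to treat the exceptional (small) eigenvalues $\lambda_j=\tfrac14-\nu_j^2$ with $0<\nu_j\le\theta$, for which $t_j=i\nu_j$ is purely imaginary. Here two effects combine: the coefficient $\rho_j(h)$ of the exceptional form satisfies only the Ramanujan bound with exponent $\theta$, contributing $h^\theta$; and the Bessel transform degenerates, since $J_{-2\nu_j}(x)\sim x^{-2\nu_j}$ for small $x$, so that $\widetilde g(i\nu_j)$ is inflated by $(\sqrt{hP}/Q)^{-2\nu_j}=(Q^2/hP)^{\nu_j}\le(Q^2/hP)^{\theta}$ when $Q^2>hP$. Together these produce the factor $h^\theta\bigl(1+(Q^2/hP)^\theta\bigr)$, and this is precisely where the known bound $\theta\le 7/64$ towards the Ramanujan--Petersson and Selberg conjectures enters.

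The main obstacle I anticipate is exactly this bookkeeping of the exceptional spectrum together with the uniformity in the level $N$: one must estimate the Bessel transform $\widetilde g(t)$ uniformly across the entire spectral range — the oscillatory band $|t|\lesssim T$, the rapidly decaying tail $|t|\gg T$, and the imaginary exceptional parameters — and then pair it correctly with the level-$N$ large sieve so that the $P/N$ gain and the $(Q^2/hP)^\theta$ enhancement emerge exactly as stated. The routine parts — the derivative bounds on $g$ (from the notation of Lemma \ref{Lemma-J}) that guarantee rapid decay of $\widetilde g$ beyond $T$, the dyadic decomposition of the spectral parameter absorbing into $(hPQ)^\epsilon$, and the identical handling of the continuous and holomorphic contributions — I would defer to the cited work of Deshouillers--Iwaniec and Blomer.
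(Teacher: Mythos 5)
The paper contains no proof of this lemma at all: it is quoted verbatim from Blomer \cite[Proposition 3.5]{Bl} (just as Lemma \ref{Lemma-J} is quoted from Deshouillers--Iwaniec), so the only meaningful comparison is with the proof in that cited source. Your sketch --- Kuznetsov for $\Gamma_0(N)$ with the missing $1/q$ normalisation accounting for the leading factor $Q$, the level-$N$ Deshouillers--Iwaniec spectral large sieve producing $\big(1+hP/Q^2+P/N\big)^{1/2}$, and the exceptional eigenvalues $t_j=i\nu_j$, $\nu_j\le\theta$, producing $h^{\theta}\big(1+(Q^2/hP)^{\theta}\big)$ --- is exactly the route taken there, and is correct as an outline.
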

\section{\textbf{Proof of Theorems}}
\noindent In this section we prove Theorems \ref{Thm-1} and \ref{Thm-2}. An important part of the proofs of these theorems is to show that the functions that are attached to the Kloosterman sums oscillate mildly. We will show this for the function $I(n, m, q)$, defined in \eqref{1.3}. The other functions are similar to this case. For fixed $n$, $I(m+r,m, q)$ is a function of $q$ and $r$ so we set $I(m+r,m, q) :=\mathcal{I}(q, r)$. In order to apply Lemma \ref{Lemma-J} we need to show that  ${\mathcal{I}(q, r)}/{q^2}$ oscillates mildly with respect to $q, r$. 
\begin{lemma}
\label{lemma-final}
Let $X^{1/2 -\varepsilon}<\mathcal{Q}<X^{1/2 +\varepsilon}$ and $\mathcal{R}<X^{3\varepsilon}$. Then for $\mathcal{Q}<q<2 \mathcal{Q}$ and $\mathcal{R}<r< 2\mathcal{R}$ we have
$$\frac{\partial^{i+j} }{\partial q^{i} \partial r^{j}}\Big(\frac{1}{X^{136\varepsilon}}\frac{\mathcal{I}(q, r)}{q^2}\Big) \ll \frac{1}{\mathcal{Q}^i \mathcal{R}^j}.$$
\end{lemma}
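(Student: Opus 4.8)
The plan is to differentiate the change-of-variables representation \eqref{2.1}. Writing $n=m+r$ and dividing by $q^{2}$, that representation reads
\[
\frac{\mathcal{I}(q,r)}{q^{2}}=\frac{4\pi^{2}q^{2}}{(4\pi)^{4}mn}\int_{0}^{\infty}\int_{0}^{\infty}uY_{0}(u)\,vY_{0}(v)\,E\Big(\frac{u^{2}q^{2}}{(4\pi)^{2}m},\frac{v^{2}q^{2}}{(4\pi)^{2}n},q\Big)\,du\,dv ,
\]
and its decisive feature is that after the substitution the two Bessel factors no longer involve $q$ or $r$: all of the $q$- and $r$-dependence now sits in the prefactor $q^{2}/(mn)$ and in the three arguments of $E$. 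First I would record the size. Using $Y_{0}(z)\ll z^{-1/2}$, so that $uY_{0}(u)\ll u^{1/2}$ on the range $u\asymp\sqrt m$ of length $\asymp\sqrt m$ (and likewise $v\asymp\sqrt n$), together with $E\ll(qQ)^{-1}$ (the $r'=s'=0$ case of the bound $\Delta_q^{(r',s')}\ll(qQ)^{-(r'+s'+1)}$ recorded before \eqref{2.3}), I get $\int uY_0(u)\,du\ll m^{3/4}$, $\int vY_0(v)\,dv\ll n^{3/4}$, and hence, since $q\asymp\mathcal Q$ and $qQ\ge X$ throughout the admissible range, the size bound $\mathcal I(q,r)/q^{2}\ll m^{-1/4}n^{-1/4}\le 1$. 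Note that, unlike Lemmas \ref{lemma-q} and \ref{lemma-r}, no integration by parts is needed, because $m,n\ll X^{3\varepsilon}$ keeps $u,v$ small.

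Next I would differentiate under the integral sign, the point being that each derivative costs only the expected scale. For $\partial_q$, the operator hits the factor $q^{2}$ (relative cost $1/q$), the arguments $x$ and $y$ through $\partial_q x=2x/q$ and $\partial_q y=2y/q$, and the explicit third slot of $E$. From \eqref{2.3} and $(f\phi)^{(r,s)}\ll X^{-(r+s)}$ together with $x,y\asymp X$ one has $xE_x,\,yE_y\ll E$, so the chain-rule factors $\partial_q x,\partial_q y$ contribute only the harmless relative factor $1/q$; for the explicit slot one needs the $q$-regularity of the $\delta$-method weight, namely $q\,\partial_q\Delta_q\ll\Delta_q$ and more generally $\partial_q^{c}\Delta_q\ll q^{-c}(qQ)^{-1}$, which I would obtain by differentiating the series $\Delta_q(u)=\sum_r (w(qr)-w(u/(qr)))/(qr)$ termwise and invoking $w^{(i)}\ll Q^{-i-1}$. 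Altogether $\partial_q^{i}(\mathcal I/q^{2})\ll q^{-i}\,m^{-1/4}n^{-1/4}$. The $r$-derivative is cleaner still: in the displayed formula the only $r$-dependence is through $n=m+r$, appearing in the prefactor $1/n$ and through $\partial_r y=-y/n$, and since $yE_y\ll E$ each $r$-derivative costs exactly the relative factor $1/n$, giving $\partial_r^{j}(\mathcal I/q^{2})\ll n^{-j}\,m^{-1/4}n^{-1/4}$. This reorganization is the crux: had I differentiated in the original variables $x,y$, an $r$-derivative would fall on the oscillating $Y_0\big(4\pi\sqrt{ny}/q\big)$ and cost only $n^{-1/2}$, which is weaker than the required $\mathcal R^{-1}$; moving the $r$-dependence into $E$ via \eqref{2.1} is what recovers the full $1/n\le\mathcal R^{-1}$.

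Combining the two, the mixed derivative satisfies $\partial_q^{i}\partial_r^{j}(\mathcal I/q^{2})\ll q^{-i}n^{-j}m^{-1/4}n^{-1/4}$. Since $q\asymp\mathcal Q$ and $n=m+r>\mathcal R$, we have $q^{-i}\asymp\mathcal Q^{-i}$ and $n^{-j}<\mathcal R^{-j}$, while $m^{-1/4}n^{-1/4}\le 1$; the only remaining losses are powers of $X^{\varepsilon}$ coming from the logarithmic behaviour of $Y_0$ near the origin, from the $\asymp\sqrt m,\sqrt n\le X^{3\varepsilon/2}$ integration ranges, and from the bounded number of terms produced by the product and chain rules, all of which are comfortably absorbed into the generous factor $X^{136\varepsilon}$. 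This yields the stated estimate. The step demanding the most care, and the one I expect to be the main obstacle, is establishing the $q$-derivative bounds $\partial_q^{c}\Delta_q\ll q^{-c}(qQ)^{-1}$ for the $\delta$-method weight (the spatial-derivative bounds quoted in the paper do not cover differentiation in the modulus), together with the routine but necessary justification that differentiation under the integral sign is legitimate and that all implied constants are uniform over $X^{1/2-\varepsilon}<\mathcal Q<X^{1/2+\varepsilon}$ and $\mathcal R<X^{3\varepsilon}$.
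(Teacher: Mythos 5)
Your proposal follows essentially the same route as the paper: both differentiate the change-of-variables representation \eqref{2.1} under the integral sign (so that the Bessel factors carry no $q$- or $r$-dependence), use the chain rule on $q^2E$, control the $q$-derivative of $\Delta_q$ by differentiating the series termwise with $w^{(i)}\ll Q^{-i-1}$ (this is exactly the computation in \eqref{3.3}, so the step you flag as the main obstacle is handled just as you suggest), exploit that the $r$-dependence sits only in the prefactor and in the first argument of $E$, and absorb the $X^{O(\varepsilon)}$ losses from the ranges $u,v\ll X^{O(\varepsilon)}$ into the factor $X^{136\varepsilon}$. The argument is correct and matches the paper's proof in all essentials.
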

\begin{proof}
By \eqref{2.1} it follows that 
\begin{align}
\label{q^'}
\frac{\partial^{} }{\partial q}\frac{\mathcal{I}(q, r)}{q^2}  \asymp \frac{1}{ n(n+r)}\int_{0}^{\infty} \int_{0}^{\infty} uY_0(u)vY_0(v)\frac{\partial^{} }{\partial q}q^2E(\frac{u^2q^2}{(4\pi)^{2}(n+r)}, \frac{v^2q^2}{(4\pi)^{2}n}, q)dudv.
\end{align} By the chain rule for multi-variable functions
\begin{align}
\label{3.2}
\frac{\partial^{} }{\partial q} q^2E=2qE+\frac{2q^3u^2}{(4\pi)^{2}(n+r)}E^{(1,0, 0)}+\frac{2q^3v^2}{(4\pi)^{2}n}E^{(0,1, 0)}+q^2E^{(0, 0, 1)}.
\end{align}
Considering the range of $q$ in Lemma, we use these bounds $E<1/X$, $E^{(1,0 , 0)}<1/X^2$ and $E^{(0, 1, 0)}<1/X^2$. For $E^{(0 ,0, 1)}$ we need to estimate
\begin{align}
\label{3.3}
& \displaystyle{\frac{\partial^{} }{\partial q}\Delta_q(x-y-h)=\sum_{r=1}^{\infty} \frac{\partial^{} }{\partial q} \Bigg(\frac{w(qr)-w(\frac{x-y-h}{qr})}{qr}}\Bigg)=  -\frac{\Delta_q(x-y-h)}{q} \\ & \notag +\sum_{r}\frac{w^{\prime}(qr)}{q}+\frac{(x-y-h)w^{\prime}(x-y-h/qr)}{q^3r^2}.
\end{align}
 By Lemma 2 in \cite{DFI} we have $ w^{i}(u) \ll \frac{1}{Q^{i+1}},$  and $\Delta_{q}(u) \ll \frac{1}{qQ}$. Therefore, each term  in \eqref{3.3} is bounded by $\frac{1}{qQ^2}.$ Plugging in these bounds into \eqref{3.2} and considering the range of $q$ we have $$\frac{\partial^{} }{\partial q} q^2E \ll (1+u^2 +v^2)\frac{X^{3\varepsilon}}{\sqrt{X}}.$$
We use the above in \eqref{q^'} and by taking to account that since $X<\frac{u^2q^2}{(4\pi)^{2}(n+r)}<2X$ the range in the integral for $u, v$ is $0<u, v<X^{5\varepsilon},$ we have $$\frac{\partial^{} }{\partial q}\frac{\mathcal{I}(q, r)}{q^2} \ll \frac{X^{34\varepsilon}}{X^{{\frac{1}{2}}+\varepsilon}} \ll \frac{1}{\mathcal{Q}^{1-\epsilon}}.$$
For the second derivative with respect to $q$ we apply the same method to each term in \eqref{3.2} and use the similar bound on the derivatives of $E$. For the derivatives with respect to $r$ we have
 \begin{align*}
\notag & \frac{\partial^{} }{\partial r}\frac{\mathcal{I}(q, r)}{q^2}   =\frac{4\pi^{2}}{(4\pi)^4 n(n+r)}\int_{0}^{\infty} \int_{0}^{\infty} uY_0\big(u\big)vY_0\big(v\big)\frac{\partial^{} }{\partial r}q^2E(\frac{u^2q^2}{(4\pi)^{2}(n+r)}, \frac{v^2q^2}{(4\pi)^{2}n}, q)dudv.
\end{align*}
Also we have
$$\Big|\frac{\partial^{} }{\partial r}q^2E(\frac{u^2q^2}{(4\pi)^{2}(n+r)}, \frac{v^2q^2}{(4\pi)^{2}n}, q)\Big|=\Big| \frac{u^2q^4}{(4\pi)^{2}(n+r)^2}E^{(1, 0, 0)} \Big| \ll \frac{u^2X^{4\varepsilon}}{r^2},$$
and therefore $$\frac{\partial^{} }{\partial r}\frac{\mathcal{I}(q, r)}{q^2} \ll \frac{X^{34\varepsilon}}{\mathcal{R}^2}.$$
Similar method we use for second derivative in respect to $r$ and derivative in respect to $q, r$. This finishes the proof of Lemma.
\end{proof}
Now we need to apply Lemma \ref{Lemma-J} to ${X^{-136\varepsilon}}{\mathcal{I}(q, r)}{q^{-2}}$. In order to do that we need to put the support of the function in dyadic intervals. Here we use Harcos's treatment \cite{Ha}. Let $\rho$ be a smooth function whose support lies in $[1, 2]$ and satisfies the following identity for $x>0$:
\begin{equation}
\label{rho}
\sum_{k=-\infty}^{\infty}\rho(2^{-k/2}x)=1.
\end{equation}
We write $$\frac{1}{X^{136\varepsilon}}\frac{\mathcal{I}(q, r)}{q^2}=\sum_{k,l=-\infty}^{\infty}\mathcal{I}_{k,l}(q, r),$$
where $$\mathcal{I}_{k,l}(q, r)=\displaystyle{\frac{1}{X^{136\varepsilon}}\frac{\mathcal{I}(q, r)}{q^2}\rho(\frac{q}{2^{k/2}Q})\rho(\frac{r}{2^{l/2}R})}$$ and $Q=X^{1/2 + \varepsilon}$ and $R=X^{3\varepsilon}$. The support of $\mathcal{I}_{k,l}(q, r)$ is $[2^{k/2}Q, 2^{k/2+\hspace{1 mm}1}Q]\times [2^{l/2}R, 2^{l/2+ \hspace{1 mm}1}R]$.
\noindent Note that we just need to use Lemma \ref{Lemma-J} in the range of $X^{1/2-\epsilon}<q<X^{1/2+\epsilon}$ and $r<R$. For $q, r$ outside of this range we estimate the sum \eqref{1.2} trivially using Lemma \ref{lemma-q} and \ref{lemma-r}.

\begin{proof}[\bf Proof of Theorem \ref{Thm-1}.]
Take $\varepsilon=\epsilon/134$. We apply Lemma \ref{Lemma-J} to $\mathcal{I}_{k,l}(q, r)$ for $-4\varepsilon \frac{\log X}{\log 2}\leq k\leq 0$ and $-6\varepsilon \frac{\log X}{\log 2} \leq l \leq 0$. For the derivatives of $\mathcal{I}_{k,l}(q, r)$ we need to have bounds on the derivatives of $\rho(\frac{q}{2^{k/2}Q})\rho(\frac{r}{2^{l/2}R})$ for which we have
\begin{equation}
\frac{\partial^{} }{\partial q} \rho(\frac{q}{2^{k/2}Q})\rho(\frac{r}{2^{l/2}R})\ll \frac{1}{2^{k/2}Q} \ll \frac{X^{4\varepsilon}}{Q}
\end{equation}
The derivative with respect to $r$ has a similar bound. Using Lemma \ref{lemma-final} and the above we have that $\mathcal{I}_{k,l}(q, r)$ satisfies the condition of Lemma \ref{Lemma-J}. Therefore to have a upper bound on the error terms arising from \eqref{1.2}, we apply Lemma \ref{Lemma-J} with $m=h$ and $P=R$ and $a_p= d(n)d(n+p)$ to $\mathcal{I}_{k,l}(q, r)$. This will take care of the error term $\mathcal{E}_2(1, 1)$ arising from $$\sum_{q<Q} \frac{1}{q^2} \Bigg( \sum_{m,n=1}^{\infty}d(n)d(m)\mathcal{S}(h, m-n ; q  )I(n, m, q)\Bigg).$$ For the error terms $\mathcal{E}_1(1, 1)$ we follow a similar method to show $I(n, q)$ in \eqref{1.4} oscillates mildly with respect to $q$ and $n$. The only difference with $I(n, m, q)$ is that instead of one of the Bessel functions in \eqref{1.3} we have a $\log q$ term coming from $ \lambda_{1, q}$. The derivative of $\log q$ has the desired decay with respect to $q$. This will finish the proof of theorem
\end{proof}
\begin{proof}[\bf Proof of Theorem \ref{Thm-2}.]\textbf{Main term.} The main term here comes from Equations \eqref{delt-thm-2-ex} and \eqref{delt-thm-2-exx}. We combine these with the argument of the section 6 of \cite{DFI} and we have the main term is
\begin{align}
\label{Main-th2}
& \sum_{\substack{q<Q\\ (p, q)=1}} S(h, 0; q) \frac{|\chi({q})|^2}{q^2} L(1, \chi)^2 \int_{0}^{\infty} f(x, x-h)\phi(h)dx
\\ & \notag +\sum_{\substack{q<Q\\ p|q}}  S(h, 0; q)\frac{\tau^{2}(\chi)}{q^2} L(1, \overline{\chi})^2 \int_{0}^{\infty} f(x, x-h)\phi(h)dx
\end{align}
\textbf{Error term.} The difference with the proof of Theorem \ref{Thm-1} is that we split the sum over $q$  in \eqref{delt-thm-2-ex}  and \eqref{delt-thm-2-exx} into two cases: $(p, q)=1$ and $p|q$. Recall that $h^{\prime}=h/p.$ For $(p, q)=1$ we need to deal with averaging the Kloosterman sums of the form $S(-h^{\prime}, -(n+m); q)$ over $q$ and $m,n$: $$\sum_{\substack{q<Q \\ (p,q)=1}} \frac{1}{q^2} \Bigg( \sum_{m,n=1}^{\infty}\tau_{\chi}(n)\overline{\tau_{\chi}(m)}S(-h^{\prime}, -(n+m);\hspace{1 mm} q)I(n, m, q)\Bigg).$$ Note that $I(n, m, q)$ would be the same as Theorem \ref{Thm-1} if we change $Y_0$ to $J_0$. For $p|q$ we average Kloosterman sums of the form $S(-h, -(n+m); \hspace{1 mm} q)$. For the sum over $q$ with the condition $p|q$ we use Lemma \ref{Lemma-Blo} and we have the error term $O(X^{1/2 + \theta + \epsilon}).$ For the sum over $q$ with the condition $(p,q)=1,$ first we add the following terms  $$\sum_{\substack{q<Q \\ p|q}} \frac{1}{q^2} \Bigg( \sum_{m,n=1}^{\infty}\tau_{\chi}(n)\overline{\tau_{\chi}(m)}S(-h^{\prime}, -(n+m);\hspace{1 mm} q)I(n, m, q)\Bigg)$$ and then subtract them. By adding this we get a sum over all $q,$ so we use Lemma \ref{Lemma-J} and we get the error therm $O(X^{1/2+\epsilon}h^{\theta}.)$ For the terms that we had subtracted we use Lemma \ref{Lemma-Blo} and we get the error term $O(X^{1/2 + \theta + \epsilon}).$
The final error will be $O(X^{1/2 + \theta + \epsilon})$ as in \eqref{Thm-2-eq}. This finishes the proof of the first part of the Theorem. \\
\\
\noindent Now to show the second part of the theorem regarding the sum of two squares note that the main term in this case is the same as \eqref{Main-th2} with setting $q=4.$ For the error term the proof is also very close to the proof of of  \eqref{Thm-2-eq}, with only minor modification. Here the sum over $q$ is divided to three cases: $(q, 4)=1,$ $4|q$ and $q\equiv2 \hspace{1 mm}(4)$ and each involves a Kloosterman sums with different arguments. For $(q, 4)=1,$ we add the sums over even $q$'s and subtract them. We use Lemmas \ref{Lemma-J} and \ref{Lemma-Blo} respectively. The error term is  $O(X^{1/2 + \theta + \epsilon})$. For $4|q$ we use Lemma \ref{Lemma-Blo} and the error term is $O(X^{1/2 + \theta + \epsilon})$. Finally for  $q\equiv2 \hspace{1 mm}(4),$ we add the sums over over $q$'s such that $4|q$ and subtract them. We use Lemma \ref{Lemma-Blo} twice, once with sum over even $q$'s and once with the sum over $q$'s such that $4|q$. This finishes the proof of the Theorem.
\end{proof}
\noindent In the next chapter we will prove Theorem \ref{Thm-5}.
\section{\textbf{Quadratic divisor problem}}
In this section we use a version of Dirichlet's hyperbola method to write the divisor function $d(n)$ in terms of a summation of a weight function. Our analysis of the left hand side of \eqref{Thm-5-eq} follows the argument in \cite{Bl-Ha-Mi}. Let $\omega$ be a smooth function such that $\omega(x) = 1$ on $[0, 1]$ and $\omega(x) = 0$ on $[2, \infty)$. For $n<Q$ we have
\begin{equation}
d(n)= \sum_{ \delta | n } \omega\big ( \frac{\delta}{\sqrt Q}\big)\big(2- \omega\big ( \frac{n}{\delta \sqrt Q}\big)\big).
\end{equation}
Thus
\begin{align*}
& \sum_{an-m=h}d(n)d(m)f(an, m)= \sum_{n=1}^{\infty}d(n)d(an-h)f(an, an-h) \\ & = \sum_{\delta =1}^{\infty}\omega\big(\frac{\delta}{Q}\big) \sum_{\delta | n}d(an-h)f(an, an-h)\bigg(2- \omega\big(\frac{n}{\delta Q}\big)\bigg) \\  & =  \sum_{\delta =1}^{\infty}\omega\big(\frac{\delta}{Q}\big) \sum_{m \equiv -h (a\delta)} d(m)f(m-h, m)\bigg(2- \omega\big(\frac{m+h}{a\delta Q}\big)\bigg)
\end{align*}
Using Corollary 4.12. of \cite{Iw-kow} for the innermost sum we have
\begin{align}
\label{6.2}
&\sum_{an-m=h}d(n)d(m)f(an, m)\notag= \\ &\sum_{q=1}^{\infty}\frac{(a, q) S(h, 0; q)}{q^2} \int_{0}^{\infty} \big(\log\big(\frac{x}{q^2})+ 2\lambda \big)K_{(a, q), q}(x)f(ax, ax-h)dx \\ & -2\pi \sum_{q=1}^{\infty} \frac{(a, q)}{q^2}  \notag \sum_{n=1}^{\infty} d(n) S(h, n; q) \int_{0}^{\infty} Y_0 \big(\frac{4\pi\sqrt{n(ax-h)}}{q}\big) K_{(a, q), q}(x)f(ax, ax-h)dx \\ & -2\pi
\notag \sum_{q=1}^{\infty} \frac{(a, q)}{q^2} \sum_{n=1}^{\infty} d(n) S(h, n; q) \int_{0}^{\infty}  K_0 \big(\frac{4\pi\sqrt{n(ax-h)}}{q}\big) K_{(a, q), q}(x)f(ax, ax-h)dx
\end{align}
where $$K_{r, q}(x)= \sum_{\delta}^{\infty} \frac{1}{\delta} \omega\big ( \frac{q\delta}{r\sqrt Q}\big)\big(2- \omega\big ( \frac{rx}{\delta q \sqrt Q}\big)\big).$$
Note that here since the support of $f$ is $[X, 2X]\times [X, 2X]$, we can take $Q= 2X/a,$ also using the definition we have that $K_{r, q}(x)=0$ for $q> 2r \sqrt{Q}.$ Similar to the proof of Theorem \ref{Thm-1} we need to show that the function attached to the Kloosterman sums in \eqref{6.2} oscillates mildly with respect with $q$ and $n$. In order to do this there is some difficulty in dealing with the function $K_{(a, q), q}(x)$.  Note that if $a|q$, $K_{(a, q), q}(x)$ might be positive while $K_{(a, q+1), q+1}(x)$ equals to zero and thus this function does not oscillate mildly with respect to $q$. Therefore we need to average over $q$'s such that $(a, q)$ is fixed. As in the case of the binary divisor problem we will show that the contribution from small $q$'s is negligible. The integral in the second sum in \eqref{6.2} equals 
\begin{align}
\label{var-change}
\frac{1}{a}\int_{0}^{\infty} Y_0 \big(\frac{4\pi\sqrt{n(x-h)}}{q}\big) K_{(a, q), q}\big(\frac{x}{a}\big)f(x, x-h)dx,
\end{align}
By the variable change $ax \rightarrow x$.  In order to prove our result we need to estimate the second and third sum in \eqref{6.2}. Using \eqref{var-change} we have that the second sum in \eqref{6.2} equals 
\begin{equation}
\label{S_2}
\uptheta:=\sum_{q=1}^{\infty} \frac{(a, q)}{aq^2}  \sum_{n=1}^{\infty} d(n) S(h, n; q) \int_{0}^{\infty}  Y_0 \big(\frac{4\pi\sqrt{n(x-h)}}{q}\big) K_{(a, q), q}\big(\frac{x}{a}\big)f(x, x-h)dx
\end{equation}
For the rest of the paper we focus on estimating this sum since the third sum in \eqref{6.2} shall satify the same bound and can be handled in a similar way. Let $(a, q)=d$, this condition is  equivalent to $(a/d, q/d)=1$ and we detect this with $\sum_{\sigma | (\frac{a}{d}, \frac{q}{d})} \mu(\sigma).$ Using this the outer sum in \eqref{S_2} simplifies to
\begin{equation}
\sum_{d|a}\sum_{\substack{q \\ \hspace{2 mm}(a,q)=d}}= \sum_{d|a}\sum_{\sigma | \frac{a}{d}}\mu(\sigma) \sum_{\substack{q \\ \hspace{2 mm}\sigma d|q}}
\end{equation}
and hence
\begin{equation}
\label{inc-ex-1}
 \uptheta= \sum_{d|a}\sum_{\sigma | \frac{a}{d}}\mu(\sigma) \sum_{\substack{q \\ \hspace{2 mm}\sigma d|q}} \uptheta_{a, h}(\sigma),
\end{equation}
where
\begin{equation}
\label{6.5}
 \uptheta_{a, h}(\sigma)=   \frac{d}{a} \sum_{\substack{q=1 \\ \sigma d|q }}^{\infty} \frac{1}{q^2} \sum_{n=1}^{\infty} d(n) S(h, n; q)I(n, q, d),
\end{equation}
and \begin{equation}
\label{Inq}
I(n, q, d) :=\int_{0}^{\infty} -2 \pi Y_0 \big(\frac{4\pi\sqrt{n(x-h)}}{q}\big) K_{d, q}(x/a)f(x, x-h)dx.
\end{equation}
Our aim is to show that
\begin{equation}
 \label{fin}
\uptheta_{a, h}(\sigma) \ll \frac{d}{a} X^{1/2+\theta} .
\end{equation} Putting this in \eqref{inc-ex-1} gives 
\begin{equation}
\label{finn}
 \uptheta \ll  X^{1/2+\theta} \sum_{d|a}  \frac{2^{\omega(a/d)}d}{a} \ll X^{1/2+\theta+\epsilon}.
\end{equation}
In order to prove \eqref{fin} we will divide the range of the summation over $n, q$ to three cases:
\begin{enumerate}
 \item  $q<X^{1/2-\epsilon}$ and $n\geq 1.$\\
\item $X^{1/2-\epsilon} \leq q \leq \sqrt{aX}$ and $n \gg q^2/X^{1-3\epsilon}.$\\
\item $X^{1/2-\epsilon} \leq q \leq \sqrt{aX}$ and $n \ll q^2/X^{1-3\epsilon}.$
\end{enumerate}
We estimate cases 1, 2, by using Lemmas \ref{lem6.1} and \ref{lem-6.2} and considering the trivial bound on the Kloosterman sum. For the case 3 wee need to estimate by using the application of the Kuznetsove formula in averaging the Kloosterman sums. To proceed we prove the following Lemmas for $I(n, q, d)$ to show that the contribution of small $q$'s are negligible
\begin{lemma}
\label{lem6.1}
For $q<X^{1/2 - \epsilon},$ we have that
$$I(n, q, d) \ll \frac{1}{n^2 X}. $$
\end{lemma}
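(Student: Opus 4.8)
The plan is to exploit the fact that for $q<X^{1/2-\epsilon}$ and $n\geq 1$ the argument of the Bessel function in \eqref{Inq} is large, so that repeated integration by parts produces an arbitrarily large saving. Since $f$ is supported in $[X,2X]\times[X,2X]$ and $h\ll X^{1-\epsilon}$, the amplitude $f(x,x-h)$ forces $x-h\asymp X$, whence the argument $u:=\tfrac{4\pi\sqrt{n(x-h)}}{q}$ satisfies $u\asymp \tfrac{\sqrt{nX}}{q}\gg X^{\epsilon}$ throughout the region of integration. First I would substitute $u=\tfrac{4\pi\sqrt{n(x-h)}}{q}$, so that $x=h+\tfrac{u^2q^2}{(4\pi)^2 n}$ and $\tfrac{dx}{du}=\tfrac{2uq^2}{(4\pi)^2 n}$, and write
\begin{equation}
I(n,q,d)=-2\pi\,\frac{2q^2}{(4\pi)^2 n}\int_0^\infty uY_0(u)\,G\big(x(u)\big)\,du,
\qquad G(x):=K_{d,q}(x/a)\,f(x,x-h).
\end{equation}

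Next I would integrate by parts $i$ times, using at the $v$-th step the identity $\big(z^vY_v(z)\big)'=z^vY_{v-1}(z)$ exactly as in the proof of Lemma \ref{lemma-q}; the boundary terms vanish because $G$ is compactly supported and is supported away from $u=0$. Each integration by parts transfers one derivative onto $G$ (picking up a factor $\tfrac{dx}{du}=\tfrac{2uq^2}{(4\pi)^2 n}$ by the chain rule) and raises the order of the Bessel function by one, yielding
\begin{equation}
I(n,q,d)\asymp \frac{q^{2(i+1)}}{n^{i+1}}\int_0^\infty u^{i+1}Y_i(u)\,G^{(i)}\big(x(u)\big)\,du,
\end{equation}
where $G^{(i)}$ denotes the $i$-th derivative of $G$ in the $x$-variable.

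The crucial input is the bound $G^{(i)}\ll X^{\epsilon-i}$. For the factor $f(x,x-h)$ this is immediate from \eqref{0.1} and the chain rule. The genuinely new point — and the main obstacle compared with the binary case, where this role was played by $\Delta_q$ with the derivative bounds of \cite{DFI} — is to show $\tfrac{d^i}{dx^i}K_{d,q}(x/a)\ll X^{-i}$ for $i\geq 1$. Here only the second cutoff in $K_{d,q}$ depends on $x$, so differentiating $i$ times produces $\big(\tfrac{d}{a\delta q\sqrt Q}\big)^i\omega^{(i)}\big(\tfrac{dx}{a\delta q\sqrt Q}\big)$; with $Q=2X/a$ the support of $\omega^{(i)}$ localizes $\delta\asymp \tfrac{d\sqrt X}{q\sqrt a}$, so that $\tfrac{d}{a\delta q\sqrt Q}\asymp X^{-1}$, while summing the weight $\tfrac1\delta$ over the $\asymp\delta$ relevant values of $\delta$ contributes only $O(1)$; hence each derivative costs a factor $X^{-1}$ and $\tfrac{d^i}{dx^i}K_{d,q}(x/a)\ll X^{-i}$. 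Finally I would insert $Y_i(u)\ll u^{-1/2}$ together with $u\asymp\tfrac{\sqrt{nX}}{q}$ over an interval of length $\asymp\tfrac{\sqrt{nX}}{q}$, and collect powers using $q<X^{1/2-\epsilon}$, to obtain $I(n,q,d)\ll n^{-i/2-1/4}X^{1-\epsilon(i+1/2)+\epsilon}$. Choosing $i$ large enough that $\tfrac{i}{2}+\tfrac14\geq 2$ and $\epsilon\big(i+\tfrac12\big)-\epsilon\geq 2$ (e.g. any $i\geq \max\{4,\,2/\epsilon+1\}$) then gives $I(n,q,d)\ll n^{-2}X^{-1}$, as required.
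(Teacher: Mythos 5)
Your proof is correct and follows essentially the same route as the paper's: the same change of variable $u=4\pi\sqrt{n(x-h)}/q$, integration by parts $i$ times via $\big(z^vY_v(z)\big)'=z^vY_{v-1}(z)$, and the bounds $Y_i(u)\ll u^{-1/2}$ and $f^{(i)}\ll X^{-i}$ combined with $q<X^{1/2-\epsilon}$ in the final power count. The only (harmless) difference is that where the paper simply quotes the derivative bound \eqref{deriv-k} on $K_{r,q}$ from \cite[Equation (2.30)]{Bl-Ha-Mi}, you rederive it directly from the definition of $K_{d,q}$, and you make explicit the power counting and choice of $i$ that the paper leaves as a sketch.
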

\begin{proof}
Since this argument is similar to the proof of Lemma \ref{lemma-q}, we just give a sketch of the proof. First we use the fact \cite[Equation (2.30)]{Bl-Ha-Mi} that
\begin{equation}
\label{deriv-k}
\frac{\partial^{i+j}}{\partial x^{i} \partial q^{j}} K_{r, q}(x) \ll_{i, j} \frac{ \log Q}{X^{i}q^j}.
\end{equation}
By using the change of variable $u={4\pi\sqrt{n(x-h)}}/{q}$ and by  integration by parts $i$ times 
\begin{equation}
\label{Lemma61}
I(n, q, d) \asymp \frac{q^{2(i+1)}}{n^{i+1}}  \int_{0}^{\infty} u^{i+1}Y_i\big(u\big)\frac{\partial^{i}}{\partial u^{i}  }\big(f(\frac{u^2q^2}{n(4\pi)^2}+ h)K_{d, q}(\frac{u^2q^2}{an(4\pi)^2}+ \frac{h}{a})\big)du.
\end{equation}
Now by applying \eqref{deriv-k} for derivatives of $K$ and \eqref{0.1} for derivatives of $f,$ and by using the fact that  $q<X^{1/2 - \epsilon},$ we conclude the proof of the Lemma.
\end{proof}
\begin{lemma}
\label{lem-6.2}
Let $\mathcal{Q}> X^{1/2-\epsilon}$. Then for $q \in [\mathcal{Q}, 2\mathcal{Q}],$ we have that
\begin{equation}
\label{7.10}
\sum_{n > \mathcal{Q}^2/{X^{1-3\epsilon}}} d(n)|I(n, q, d)| \ll 1.
\end{equation}
\end{lemma}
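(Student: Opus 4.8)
The plan is to exploit the representation \eqref{Lemma61} of $I(n,q,d)$ obtained in the proof of Lemma \ref{lem6.1} by $i$-fold integration by parts, and to show that once $n$ exceeds the threshold $N_0:=\mathcal{Q}^2/X^{1-3\epsilon}$ the Bessel factor forces genuine decay. First I would record the effect of the change of variables $u=4\pi\sqrt{n(x-h)}/q$ used there: since $f(x,x-h)$ is supported on $x\in[X,2X]$ and $h\ll X^{1-\epsilon}$, the variable $u$ runs over an interval of length $\asymp\sqrt{nX}/q$ centred at $u\asymp\sqrt{nX}/q$. For $q\in[\mathcal{Q},2\mathcal{Q}]$ and $n>N_0$ this gives $u\gg 1$ throughout (indeed $u$ has size at least $\asymp X^{3\epsilon/2}$), so the large-argument bound $Y_i(u)\ll u^{-1/2}$ is available on the whole range of integration.

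Next I would bound the $i$-th derivative appearing in \eqref{Lemma61}. Writing $\tilde g(x)=f(x,x-h)K_{d,q}(x/a)$, the product and chain rules together with the derivative estimate \eqref{0.1} for $f$ and the bound \eqref{deriv-k} for $K_{d,q}$ (the extra factors of $1/a$ produced by differentiating $x/a$ only help) yield $\tilde g^{(i)}(x)\ll X^{-i}\log Q$ uniformly on the support $x\asymp X$. Inserting this together with $Y_i(u)\ll u^{-1/2}$ into \eqref{Lemma61} and integrating $u^{i+1/2}$ over the $u$-interval of length $\asymp\sqrt{nX}/q$, I expect the clean bound
\[
I(n,q,d)\ll \log Q\; q^{\,i+1/2}\,n^{-i/2-1/4}\,X^{3/4-i/2},
\]
valid for every fixed $i$, every $n>N_0$ and every $q\asymp\mathcal{Q}$.

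Finally I would sum this against $d(n)$ over $n>N_0$. Taking $i\ge 2$ so that the exponent $i/2+1/4>1$, partial summation with $\sum_{n\le T}d(n)\ll T\log T$ gives $\sum_{n>N_0}d(n)\,n^{-i/2-1/4}\ll N_0^{3/4-i/2}X^{\epsilon}$. Substituting $N_0=\mathcal{Q}^2X^{-(1-3\epsilon)}$, the powers of $\mathcal{Q}$ collapse to $\mathcal{Q}^2$ and the powers of $X$ to $X^{3\epsilon(3/4-i/2)}$, so that
\[
\sum_{n>N_0}d(n)\,|I(n,q,d)|\ll \mathcal{Q}^{2}\,X^{3\epsilon(3/4-i/2)+\epsilon}\log Q.
\]
Since $q\le\sqrt{aX}$ in Case 2 we have $\mathcal{Q}^{2}\ll aX$, and choosing $i$ large in terms of $\epsilon$ (of the shape $i\gg 1/\epsilon$) drives the negative power of $X$ below $-1$, overriding the factor $aX$ and yielding the asserted bound $\ll 1$.

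The main obstacle, and the reason the hypothesis $\mathcal{Q}>X^{1/2-\epsilon}$ is imposed, is precisely this competition: the trivial size of $I(n,q,d)$ for small $n$ is of order $X$ (equivalently the collapsed factor $\mathcal{Q}^{2}\asymp X$), and only the decay $X^{-3\epsilon(i/2-3/4)}$ harvested from many integrations by parts—legitimate because $u\gg 1$ on the entire support once $n>N_0$—can beat it. A secondary technical point I would check with care is the smoothness estimate for the composite $K_{d,q}(x/a)$, namely that \eqref{deriv-k} survives the rescaling by $a$ and the restriction to $x\asymp X$ forced by $f$; this is where the choice $Q=2X/a$ enters, balancing the two cut-offs in $K_{d,q}$ so that each $x$-derivative genuinely costs a factor $1/X$.
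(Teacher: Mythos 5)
Your proposal is correct and follows essentially the same route as the paper: the paper's own (very terse) proof likewise starts from the representation \eqref{Lemma61}, applies the derivative bounds \eqref{0.1} and \eqref{deriv-k} to $f$ and $K_{d,q}$, and concludes by taking $i$ large. Your write-up simply supplies the details the paper omits — the observation that $u\asymp\sqrt{nX}/q\gg X^{3\epsilon/2}$ on the support so the decay $Y_i(u)\ll u^{-1/2}$ is usable, the explicit power count giving $I(n,q,d)\ll q^{i+1/2}n^{-i/2-1/4}X^{3/4-i/2}\log Q$, and the divisor-sum tail estimate — all of which check out.
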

\begin{proof}
Considering the range $q \in [\mathcal{Q}, 2\mathcal{Q}],$ in \eqref{Lemma61} and similar to the proof of Lemma \ref{lem6.1}, we use \eqref{deriv-k} and \eqref{0.1} on derivatives of $f$ and $K$. Finally by taking $i$ large enough we will have \eqref{7.10}.
\end{proof}
\noindent This lemma shows that we only need to consider the contribution of $n < \mathcal{Q}^2/{X^{1-3\epsilon}}$ in \eqref{6.5}. Now we need to show that the derivatives of $I(n, q, d)/q^2,$ satisfy the conditions of Lemma \ref{Lemma-Blo}. Note that by using a smooth partition of unity
similar to the proof of Theorem \ref{Thm-1} we break the support of $I(n, q, d)$ to dyadic intervals. The largest error term comes from $d\sqrt{2X/a}< q<2d\sqrt{X/a},$ and $\displaystyle{{d^2 X^{\epsilon}}{a^{-1}} < n< 2{d^2 X^{\epsilon}}{a^{-1}}.}$ Also note that $K_{d, q}=0$ for $q>2d\sqrt{X/a}.$
\begin{lemma}
\label{lemma-6.3}
Let $\mathcal{Q}> X^{1/2-\epsilon}$ and $q \in [\mathcal{Q}, 2\mathcal{Q}],$ and $n \in [N, 2N]$ then for $ 0 \leq i, j \leq 2$
\begin{equation}
\label{deriv-I}
\frac{\partial^{i+j}}{\partial q^{i} \partial n^{j}} \big( \frac{\mathcal{Q}^{2-\epsilon}}{X}\frac{I(n, q, d)}{q^2} \big ) \ll_{i, j} \frac{1}{\mathcal{Q}^{i}N^j}.
\end{equation}
\end{lemma}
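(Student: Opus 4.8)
The plan is to mirror the treatment of $\mathcal{I}(q,r)/q^2$ in Lemma~\ref{lemma-final}: I would differentiate the integral representation~\eqref{Inq} under the integral sign and show that each $q$-derivative gains a factor $\asymp 1/\mathcal{Q}$ and each $n$-derivative a factor $\asymp 1/N$, while the prefactor $\mathcal{Q}^{2-\epsilon}/X$, together with the lower bound $\mathcal{Q}>X^{1/2-\epsilon}$, absorbs the logarithmic and $X^{\epsilon}$ losses that accumulate. The key simplification is that $f(x,x-h)$ and the $x$-support of the integral are independent of $q$ and $n$, so only the Bessel factor $Y_0\big(\tfrac{4\pi\sqrt{n(x-h)}}{q}\big)$ and the kernel $K_{d,q}(x/a)$ get differentiated.

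For the Bessel factor, write $z=z(q,n)=\tfrac{4\pi\sqrt{n(x-h)}}{q}$, so that $\partial_q z=-z/q$ and $\partial_n z=z/(2n)$. Since $z$ is homogeneous of degree $-1$ in $q$ and $1/2$ in $n$, the chain rule expresses $\partial_q^i\partial_n^j Y_0(z)$ as a bounded combination of terms $q^{-i}n^{-j}z^{c}Y_0^{(c)}(z)$ with $c\le i+j$. The derivative estimate~\eqref{Bess-deriv-est} controls each $z^{c}Y_0^{(c)}(z)$ by $(1+z)^{c-1/2}(1+|\log z|)$, and on the range of Case~3 one has $z\ll X^{3\epsilon/2}$, so these factors are $\ll X^{\epsilon}$ up to logarithms. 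Hence $\partial_q^i\partial_n^j Y_0(z)\ll X^{\epsilon}\,q^{-i}n^{-j}$ on $[\mathcal{Q},2\mathcal{Q}]\times[N,2N]$. For the kernel, \eqref{deriv-k} gives $\partial_q^{j}K_{d,q}(x/a)\ll(\log Q)\,q^{-j}$, and $K_{d,q}$ is independent of $n$.

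Combining these via the Leibniz rule applied to the three factors $q^{-2}$, $Y_0(z)$, $K_{d,q}(x/a)$, and integrating against $f$ over $x\in[X,2X]$ (length $\asymp X$, with $f\ll 1$), I expect to obtain
\begin{equation*}
\frac{\partial^{i+j}}{\partial q^{i}\partial n^{j}}\frac{I(n,q,d)}{q^2}\ll \frac{X^{1+O(\epsilon)}}{\mathcal{Q}^{2+i}N^{j}}.
\end{equation*}
Multiplying by $\mathcal{Q}^{2-\epsilon}/X$ and invoking $\mathcal{Q}>X^{1/2-\epsilon}$, the surplus factor $\mathcal{Q}^{-\epsilon}$ dominates the accumulated $X^{O(\epsilon)}$ and logarithmic factors and yields the claimed bound $\ll \mathcal{Q}^{-i}N^{-j}$ for $0\le i,j\le 2$, exactly as the normalizing power $X^{-136\varepsilon}$ functions in Lemma~\ref{lemma-final}.

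The main obstacle is the kernel $K_{d,q}(x/a)$, which comes from the Dirichlet hyperbola decomposition of $d(n)$ rather than from a genuinely smooth weight. As noted before~\eqref{inc-ex-1}, the function $K_{(a,q),q}$ fails to vary mildly in $q$ because its support depends on the gcd $(a,q)$; this is precisely why one first fixes $(a,q)=d$ and averages only over $\sigma d\mid q$. With $d$ held fixed, $K_{d,q}$ is governed by the sharp derivative estimate~\eqref{deriv-k}, and the whole argument hinges on applying it in the correct range $q\asymp\mathcal{Q}$. A secondary technical point is the logarithmic singularity of $Y_0(z)$ as $z\to 0^{+}$, which is harmless since it contributes only the factor $(1+|\log z|)\ll\log X$ already accounted for above. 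Once~\eqref{deriv-I} is established, the normalized function $\tfrac{\mathcal{Q}^{2-\epsilon}}{X}\tfrac{I(n,q,d)}{q^2}$ satisfies the $C^2$ hypotheses of Lemma~\ref{Lemma-Blo} on each dyadic block, which is the role this lemma plays in the subsequent estimation of Case~3.
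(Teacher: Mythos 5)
Your proposal is correct and follows essentially the same route as the paper: differentiate \eqref{Inq} under the integral sign, use the recursion/derivative bounds \eqref{Bess-deriv-est} for $Y_0$ together with $z=4\pi\sqrt{n(x-h)}/q\ll X^{O(\epsilon)}$ (justified by Lemma \ref{lem-6.2}), use \eqref{deriv-k} for $K_{d,q}$ with $d=(a,q)$ fixed, and let the prefactor $\mathcal{Q}^{2-\epsilon}/X$ with $\mathcal{Q}>X^{1/2-\epsilon}$ absorb the $X^{O(\epsilon)}$ and logarithmic losses. The only difference is organizational (you treat all $0\le i,j\le 2$ uniformly via Leibniz and homogeneity of $z$, where the paper computes the cases $(0,1)$, $(1,0)$, $(1,1)$ explicitly and declares the rest similar), and your bookkeeping of the exponent, $z\ll X^{3\epsilon/2}$, is if anything slightly more careful than the paper's.
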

\begin{proof}
We differentiate once with respect to $q$ and once with respect to $n$. We  state the necessary bounds on functions in the integrant \eqref{Inq}. The derivative in respect to $n$
is \begin{equation}
\label{eq-dev-1}
\frac{\partial}{ \partial n} I(n, q, d) =\int_{0}^{\infty} -  \frac{1}{nq^2}\frac{4\pi^2\sqrt{n(x-h)}}{q}Y^{\prime}_0 \big(\frac{4\pi\sqrt{n(x-h)}}{q}\big) K_{d, q}(x/a)f(x, x-h)dx
\end{equation}
By Lemma \ref{lem-6.2} we may assume $n \leq q^2/x^{1 - 3\epsilon}$. Therefore for
$z= {4\pi\sqrt{n(x-h)}}{/q},$ since $x \in [X, 2X]$ we have $z \ll X^{\epsilon}$. In order to use \eqref{Bess-deriv-est} we need to multiply the integral \eqref{eq-dev-1} with $1/(1+z)$ and since $z \ll X^{\epsilon}$ this would at most augment it by $X^{\epsilon}$. Now we pull out $1/n$ from  \eqref{eq-dev-1} and we use  $q \in [\mathcal{Q}, 2\mathcal{Q}],$ and the fact that $f$ is supported in $[X, 2X] \times [X, 2X]$ and $K \ll \log X$ to get \eqref{deriv-I} for $(i=0, j=1).$  If we differentiate $I(n, q, d)$ with respect to $q$ using \eqref{deriv-k} we obtain  \eqref{deriv-I} for $(i=1, j=0)$ exactly similar to the case $(i=0, j=1).$ Now we differentiate \eqref{eq-dev-1} with respect to $q$ to obtain \eqref{deriv-I} for $(i=1, j=1).$
\begin{align}
\label{eq-dev-2}
 \frac{\partial}{ \partial q \partial n}& I(n, q, d)  = \\ &  \notag \frac{1}{n}\int_{0}^{\infty} - \frac{\partial}{ \partial q}\bigg( \frac{1}{q^2}\frac{4\pi^2\sqrt{(x-h)}}{q}Y^{\prime}_0 \big(\frac{4\pi\sqrt{n(x-h)}}{q}\big) K_{d, q}(x/a)f(x, x-h)dx\bigg).
\end{align}
All the terms with $q$ in the denominator and also $K_{d, q}$ would obviously give us the $1/q$ saving that we need. We just treat the term with derivative of Bessel function.
$$\frac{\partial}{ \partial q}Y^{\prime}_0 \big(\frac{4\pi\sqrt{n(x-h)}}{q}\big) = \frac{1}{q}\bigg(\frac{-4\pi\sqrt{n(x-h)}}{q}Y^{(2)}_0 \big(\frac{4\pi\sqrt{n(x-h)}}{q}\big)\bigg),$$
which for bounding this we use \eqref{Bess-deriv-est} and the fact that $z \ll X^\epsilon.$ For the cases $(i=2, j=1)$, $(i=1, j=2)$ and $(i=2, j=2)$ the proof is similar to the case $(i=1, j=1).$ This finishes the proof of the Lemma.
\end{proof}
\begin{proof}[\bf Proof of Theorem \ref{Thm-5}] Here the main term is:
\begin{equation}
\label{main-thm5}
\sum_{q=1}^{\infty}\frac{(a, q) S(h, 0; q)}{q^2} \int_{0}^{\infty} \big(\log\big(\frac{x}{q^2})+ 2\lambda \big)K_{(a, q), q}(x)f(ax, ax-h)dx
\end{equation}
For the error term, note that in order to use Lemma \ref{Lemma-Blo} we need to use the smooth partition of unity to put the support of $I(n, q, d)/q^2$ in dyadic intervals. In order to do this let $\rho$ be the same as the proof of Theorem \ref{Thm-1} and $\rho$ satisfies \eqref{rho} and $R=d\sqrt{2X/a}$ and $S=d^2 X^{\epsilon}/{a},$ we write  ${q^{-2}}{I(n, q, d)}=\sum_{k,l=-\infty}^{\infty}I_{k,l}(n, q, d),$
where
\begin{equation}
\label{partition}
 I_{k,l}(n, q, d)=q^{-2}{I(n, q, d)}\rho(\frac{q}{2^{k/2}R})\rho(\frac{n}{2^{l/2}S})
\end{equation}
  The support of $I_{k, l}$ is $[2^{k/2}R, 2^{k/2+\hspace{1 mm}1}R]\times [2^{l/2}S, 2^{l/2+ \hspace{1 mm}1}S].$ Now since  $K_{d, q}=0$ for $q>2d\sqrt{X/a}$ and the fact that $I_{k, l}$ is supported on $2^{k/2} d\sqrt{2X/a} \leq q \leq 2^{k/2 +1} \hspace{1 mm} d\sqrt{2X/a}$, we conclude that $k \leq 0.$ Also we have $|k| \ll \log X.$
To continue with the proof we are returning to our range separation for the summation over $q, n$ in \eqref{6.5}:
\begin{enumerate}
 \item  $q<X^{1/2-\epsilon}$ and $n\geq 1.$ For this range using Lemma \ref{lem6.1} we have $I_{k, l}(n, q, d) \ll I(n, q, d) \ll {n^{-2} X^{-1}}$ and consequently
\begin{align*}
\sum_{\substack{q=1 \\ d\sigma|q }}^{X^{1/2- \epsilon}}\sum_{k, l}  & \hspace{1 mm}  \sum_{n=1}^{\infty} d(n)\frac{I_{k ,l}(n, q, d)}{q^2} S(h, n; q)  \\ & \ll \sum_{\substack{q=1 \\ d\sigma|q }}^{X^{1/2- \epsilon}} \hspace{1 mm}  \sum_{n=1}^{\infty} d(n)\frac{I(n, q, d)}{q^2} S(h, n; q) \ll \frac{1}{X}\hspace{1 mm}\sum_{\substack{q=1 \\ d\sigma|q }}^{X^{1/2- \epsilon}} \hspace{1 mm} \sum_{n=1}^{\infty} \frac{d(n)}{qn^2} \ll X^{\epsilon-1}.
\end{align*}
\item $X^{1/2-\epsilon} \leq q \leq d(X/a)^{1/2}$ and $n \gg q^2X^{3\epsilon-1}.$ First note that if $d/\sqrt a < X^{-\epsilon}$ then since $K_{d, q}=0$ for $q>2d(X/a)^{1/2},$ we fall into the first range. Now by using Lemma \ref{lem-6.2} we have
\begin{align*}
\sum_{k, l} & \sum_{\substack{ X^{1/2- \epsilon} \leq q \leq {d(X/a)^{1/2}} \\ d\sigma|q  }} \hspace{2 mm}  \sum_{n \gg q^2X^{3\epsilon-1}} d(n)\frac{I_{k, l}(n, q, d)}{q^2} S(h, n; q) \\ &
\sum_{\substack{ X^{1/2- \epsilon} \leq q \leq {d(X/a)^{1/2}} \\ d\sigma|q  }} \hspace{2 mm}  \sum_{n \gg q^2X^{3\epsilon-1}} d(n)\frac{I(n, q, d)}{q^2} S(h, n; q) \ll \sum_{\substack{ X^{1/2- \epsilon} \leq q \leq {d(X/a)^{1/2}}  }} \hspace{2 mm} \frac{1}{q} \ll 1.
\end{align*}
\item $X^{1/2-\epsilon} \leq q \leq \sqrt{aX}$ and $n \ll q^2X^{3\epsilon-1}.$ For this range we need to apply Lemma \ref{Lemma-Blo}.  By using \eqref{partition} we beak the support of $I$ into dyadic intervals and for the current range we have to deal with $I_{k, l}$ where $$ \frac{2}{\log 2}\big( \log \big(\frac{\sqrt a}{\sqrt{2}d X^{\epsilon}}\big) -1\big) \leq  k \leq 0.$$
and $$ \frac{2}{\log 2} \log\big(\frac{a}{d^2X^{\epsilon}}\big) \leq l \leq \frac{2}{\log 2} \log\big(\frac{d^2q^2}{aX^{1-2\epsilon}}\big)-3.$$
Our aim is to handled the following sum for $k,l$ in the above range
\begin{equation*}
\sum_{k, l}\sum_{\substack{q=1 \\ d\sigma|q }}^{\infty}  \sum_{n=1}^{\infty} d(n)\frac{I_{k, l}(n, q, d)}{q^2} S(h, n; q)
\end{equation*}
    Each of the $I_{k, l}$ in the above range can be handled using Lemma \ref{Lemma-Blo}. Here we only consider the range $R< q<2R,$ and $S < n< 2S$ i.e $k=l=1,$ for which we have the biggest error term. Now since the volume of the box that $k, l$ take their values in, is bounded by $(\log X)^2$ we have the final error term is bounded by $(\log X)^2$ times the error that comes from $k=l=1$. Lemma \ref{lemma-6.3} enable us to average the Kloosterman sums above by employing Lemma \ref{Lemma-Blo} and also save a factor $X/\mathcal{Q}^{2-\epsilon}$. Thus by setting $a_p=d(p)$, $Q=R$ and $P=S$ in Lemma \ref{lemma-6.3} we have the above is bounded by
    $$\frac{a}{d^2} d\big(\frac{X}{a}\big)^{1/2} \big(\frac{d^2}{a}\big)^{1/2}\big(1+\frac{h}{X}+ \frac{d}{a\sigma}\big)^{1/2}h^{\theta}\bigg(1+ \big(\frac{X}{h}\big)^{\theta}\bigg)$$
 and therefore   \eqref{6.5} is bounded by
    $dX^{1/2+\theta+\epsilon}/a.$
     This finishes the proof of \eqref{fin} and using \eqref{finn} finishes the proof of the theorem.
\end{enumerate}
\end{proof}
\subsection*{Acknowledgements}
I would like to thank my supervisor Nathan Ng for introducing the problem and all the support during my work. Also, I am thankful to Amir Akbary and Gergely Harcos for their helpful comments. 

{\it E-mail address}: farzad.aryan@uleth.ca
\end{document}